\tikzset{negated/.style={
        decoration={markings,
            mark= at position 0.5 with {
                \node[transform shape] (tempnode) {$\backslash$};
            }
        },
        postaction={decorate}
    }
}
\newtheorem{theorem}{Theorem}[section]
\newtheorem{proposition}[theorem]{Proposition}
\newtheorem{lemma}[theorem]{Lemma}
\newtheorem{corollary}[theorem]{Corollary}
\newtheorem{alphatheorem}{Theorem}
\theoremstyle{definition}
\newtheorem{definition}[theorem]{Definition}
\newtheorem{example}[theorem]{Example}
\newtheorem*{remark*}{Remark}
\newtheorem{question}[theorem]{Question}
\DeclareMathOperator*{\PPP}{\scalerel*{\mathbb{P}}{\textstyle\sum}}
\newcommand{\ip}[1]{\left\lfloor #1 \right\rfloor }
\newcommand{\floor}[1]{\left\lfloor #1 \right\rfloor}
\newcommand{\bra}[1]{\left(#1\right)}
\newcommand{\brabig}[1]{\big(#1\big)}
\newcommand{\braBig}[1]{\Big( #1 \Big)}
\renewcommand{\tilde}{\widetilde}
\renewcommand{\bar}{\overline}
\newcommand{\abs}[1]{\left|#1\right|}
\newcommand{\set}[2]{\left\{ #1 \ \middle| \ #2 \right\} }
\newcommand{\ceil}[1]{\left\lceil #1 \right\rceil}
\newcommand{\e}{\varepsilon}
\renewcommand{\a}{\alpha}
\renewcommand{\b}{\beta}
\newcommand{\NN}{\mathbb{N}}
\newcommand{\QQ}{\mathbb{Q}}
\newcommand{\ZZ}{\mathbb{Z}}
\newcommand{\RR}{\mathbb{R}}
\newcommand{\cA}{\mathcal{A}}
\newcommand{\cH}{\mathcal{H}}
\newcommand{\cN}{\mathcal{N}}
\definecolor{fresh}{HTML}{2bb101}
\definecolor{checked}{HTML}{1e5e06}
\definecolor{double}{HTML}{5E3800}
\definecolor{external}{HTML}{a81a78}
\definecolor{later}{HTML}{0410ff}
\definecolor{minor-rev}{HTML}{d96a09}
\definecolor{major-rev}{HTML}{c90000}
\definecolor{skip}{HTML}{ffffff}\definecolor{normal}{HTML}{000000}
\definecolor{fresh}{HTML}{000000}
\definecolor{checked}{HTML}{000000}
\definecolor{double}{HTML}{000000}
\definecolor{external}{HTML}{000000}
\definecolor{later}{HTML}{000000}
\definecolor{minor-rev}{HTML}{000000}
\definecolor{major-rev}{HTML}{000000}
\newcommand{\bb}{\mathbf}
\renewcommand{\subset}{\subseteq}
\newcommand*\patchAmsMathEnvironmentForLineno[1]{\expandafter\let\csname old#1\expandafter\endcsname\csname #1\endcsname
  \expandafter\let\csname oldend#1\expandafter\endcsname\csname end#1\endcsname
  \renewenvironment{#1}{\linenomath\csname old#1\endcsname}{\csname oldend#1\endcsname\endlinenomath}}\newcommand*\patchBothAmsMathEnvironmentsForLineno[1]{\patchAmsMathEnvironmentForLineno{#1}\patchAmsMathEnvironmentForLineno{#1*}}\AtBeginDocument{\patchBothAmsMathEnvironmentsForLineno{equation}\patchBothAmsMathEnvironmentsForLineno{align}\patchBothAmsMathEnvironmentsForLineno{flalign}\patchBothAmsMathEnvironmentsForLineno{alignat}\patchBothAmsMathEnvironmentsForLineno{gather}\patchBothAmsMathEnvironmentsForLineno{multline}}
\newcounter{claimcounter}
\begin{document}

\author[J.\ Konieczny ]{Jakub Konieczny}

\title{An asymptotic version of  Cobham's theorem}
\begin{abstract}
	We introduce the notion of an asymptotically automatic sequence, which generalises the notion of an automatic sequence, and we prove a variant of Cobham's theorem for the newly introduced class of sequences.
\end{abstract}

\keywords{Cobham’s theorem, automatic sequences}
\subjclass[2010]{11B85 (Primary),  68Q45 (Secondary)}

\maketitle 

\color{checked}
\section{Introduction}\label{sec:Intro}

\subsection{Background}
\color{checked}
Cobham's theorem \cite{Cobham-1969} is one of the fundamental results in the theory of automatic sequences, that is, sequences whose $n$-th term can be computed by a finite automaton which receives as input the expansion of $n$ in a given base $k \geq 2$. Said theorem asserts that a sequence cannot be automatic with respect to two different bases $k$ and $l$, except for the arguably trivial cases where the sequence is eventually periodic (hence automatic in all bases), or where the bases are multiplicatively dependent (hence $k$-automatic sequences are the same as $l$-automatic sequences). We recall that two integers $k,l \geq 2$ are multiplicatively dependent if they are both powers of the same integer, that is, if $(\log k)/(\log l) \in \QQ$, and they are multiplicatively independent otherwise. For background on automatic sequences, we refer to \cite{AlloucheShallit-book}.

Cobham's theorem has attracted considerable attention over the years, and has been extended in many directions. Specifically, variants of this theorem are known for
the multidimensional setting \cite{Semenov-1977} (see also \cite{Durand-2008} and \cite{MichauxVillemaire-1996});
other numeration systems \cite{ Fabre-1994, PointBruyere-1997, Durand-1998b, Hansel-1998,  Bes-2000, DurandRigo-2009};
morphic sequences \cite{Durand-2011} (see also \cite{ Fabre-1994, Durand-1998, Durand-2002-AA});
fractals \cite{AdamczewskiBell-2011} (see also \cite{ChanHare-2014} and \cite{CharlierLeroyRigo-2015});
quasi-automatic sequences \cite{AdamczewskiBell-2008};
regular sequences \cite{Bell-2005};
 Mahler series \cite{AdamczewskiBell-2013} (see also \cite{SchafkeSinger-2017});
real numbers \cite{BoigelotBrusten-2009} (see also \cite{ BoigelotBrustenLeroux-2009, BoigelotBrustenBruyere-2010});
Gaussian integers \cite{HanselSafer-2003,BosmaFokkinkKrebs-2017}. In joint work with J. Byszewski \cite{ByszewskiKonieczny-2020-AA}, motivated by a question of Deshouillers \cite{DeshouillersRuzsa-2011,Deshouillers-2012}, we obtained a variant of Cobham's theorem up to equality almost everywhere. 

\subsection{Asymptotic automaticity}
The purpose of this paper is to add another entry to the list of extensions of Cobham's theorem, and to obtain a strictly stronger variant of the main result of \cite{ByszewskiKonieczny-2020-AA}. In order to state it, we will need to introduce some terminology. 

We will say that two sequences $f,g \colon \NN_0 \to \Omega$ taking values in a set $\Omega$ are \emph{asymptotically equal}, denoted by $f \simeq g$, if $f(n) = g(n)$ for almost all $n \in \NN_0$, that is,
\[
	\abs{ \set{n < N}{f(n) \neq g(n)}}/N \to 0 \text{ as } N \to \infty.
\]

For $k \in \NN$ with $k \geq 2$ and for a sequence $f \colon \NN_0 \to \Omega$ we define the \emph{$k$-kernel} of $f$, which consists of all restrictions of $f$ to a residue class modulo a power of $k$,
\[
	\cN_k(f) = \set{ \NN_0 \ni n \mapsto f(k^\a n + r) \in \Omega}{ \a, r \in \NN_0,\ r < k^\a}.
\]	
The sequence $f$ is $k$-automatic if and only if its $k$-kernel is finite, $\abs{\cN_k(f)} < \infty$. 

\begin{definition}
	Let $k \in \NN$, $k \geq 2$, and let $f \colon \NN_0 \to \Omega$ be a sequence taking values in a finite set $\Omega$. Then $f$ is \emph{asymptotically $k$-automatic} if and only if $\abs{ \cN_k(f)/{\simeq} } < \infty$.
\end{definition}

 In other words, the sequence $f$ is asymptotically $k$-automatic if and only if there exists a finite number of sequences $f_0,f_1,\dots,f_{d-1} \colon \NN_0 \to \Omega$ such that for each $f' \in \cN_k(f)$ there exists $0 \leq i < d$ such that $f' \simeq f_i$. An alternative description of asymptotically $k$-automatic sequences is given in Lemma \ref{lem:ass-aut=>rep}. The requirement that the output space $\Omega$ should be finite does not play a significant role, but we include it for the sake of consistency with the theory of automatic sequences.

Of course, each $k$-automatic sequence is asymptotically $k$-automatic. In \cite{ByszewskiKonieczny-2020-AA}, we studied sequences which are asymptotically equal to an automatic sequence, that is, sequences $f \colon \NN_0 \to \Omega$ such that there exists a $k$-automatic sequence $\tilde f \colon \NN_0 \to \Omega$ with $f \simeq \tilde  f$. Other examples of asymptotically $k$-automatic sequences appear in an upcoming paper on classification of automatic subsemigroups of $\NN$ \cite{KlurmanKonieczny-upcoming}. In the situation above, $f$ is asymptotically $k$-automatic; in fact, it is not hard to check that $\abs{\cN_k(f)/{\simeq}} \leq \abs{\cN_k(\tilde f)}$. We note, however, that the reverse implication does not hold, as shown by the following examples. To avoid breaking the flow of the exposition, we will postpone the proofs of claims made in Examples \ref{ex:leading-prime} and \ref{ex:gap} to Section \ref{sec:Example-2}. 

\newcommand{\lini}{\lambda}\newcommand{\lmax}{\kappa}

\color{fresh}
\begin{example}\label{ex:leading-prime}\color{checked}

	For an integer $n \in \NN_0$, let $\lini(n)$ denote the number of leading $1$s in the binary expansion of $n$. For instance, $\lini(0) = 0$ and $\lini(123) = \lini\bra{[1111011]_2} = 4$. Define $f \colon \NN_0 \to \{0,1\}$ by
	\[
		f(n) =
	\begin{cases}
		1 & \text{if } \lini(n) \text{ is prime},\\
		0 & \text{otherwise.}
	\end{cases}
	\]
	Then $f$ is asymptotically $2$-automatic but not asymptotically equal to a $2$-automatic sequence.
\end{example}	

{\color{minor-rev}

}

\begin{example}\label{ex:gap}
	For an integer $n \in \NN_0$, let $\lmax(n)$ denote the maximal number of consecutive $1$s in the binary expansion of $n$. For instance, $\lmax(0) = 0$ and $\lmax(1234) = m\bra{[10011010010]_2} = 2$. 
Define $f \colon \NN_0 \to \{0,1\}$ by
	\[
		f(n) =
	\begin{cases}
		1 & \text{if } \lmax(n) \text{ is odd},\\
		0 & \text{otherwise.}
	\end{cases}
	\]	
	Then $f$ is asymptotically $2$-automatic but not asymptotically equal to a $2$-automatic sequence. 
\end{example}

\color{checked}

We will also need two asymptotic analogues of the notion of a periodic sequence.
For a sequence $f \colon \NN_0 \to \Omega$, we let $S f$ denote the shift of $f$ by $1$, that is, $S f(n) = f(n+1)$ for all $n \in \NN_0$. It is almost a tautology to say that $f$ is periodic with period $m$ (or $m$-periodic, for short) if and only if $S^mf = f$. We will say that $f \colon \NN_0 \to \Omega$ is \emph{asymptotically equal to an $m$-periodic sequence} if there exists an $m$-periodic sequence $\tilde f$ with $f \simeq \tilde f$, and we will say that $f$ is \emph{asymptotically invariant under shift by $m$} if $S^m f \simeq f$. It is not hard to check that each asymptotically $m$-periodic sequence is asymptotically invariant under shift by $m$, but the reverse implication does not hold, and asymptotic invariance under a shift does not imply asymptotic automaticity. Like before, we defer the proof of the following example to Section \ref{sec:NonExple}.

\begin{example}\label{ex:non}
Define $f \colon \NN_0 \to \{0,1\}$ by
	\[
		f(n) = \ip{\sqrt{n}} \bmod 2.
	\]	
	Then $f$ is asymptotically invariant under shift by $1$ but not asymptotically $k$-automatic in any base $k \geq 2$, and hence not asymptotically periodic.
\end{example}

\subsection{New results}
We are now ready to formulate our main results.

\begin{alphatheorem}\label{thm:main-symmetric}
	Let $\Omega$ be a finite set, let $k,l \geq 2$ be multiplicatively independent integers and let $f \colon \NN_0 \to \Omega$ be a sequence. Suppose that $f$ is asymptotically $k$-automatic and asymptotically $l$-automatic. Then $f$ is asymptotically invariant under a non-zero shift.
\end{alphatheorem}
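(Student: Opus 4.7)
The plan is to exploit the two asymptotic automaticity hypotheses in parallel: pigeonhole within the finite sets $\cN_k(f)/{\simeq}$ and $\cN_l(f)/{\simeq}$ yields partial shift-invariances at every scale, and multiplicative independence of $k$ and $l$ then lets one amplify these into a single global shift-invariance. Set $d_k := \abs{\cN_k(f)/{\simeq}}$ and $d_l := \abs{\cN_l(f)/{\simeq}}$, both finite.

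I would first extract small partial periods in base $k$. At any scale $\a$, the $k^\a$ kernel sequences $n \mapsto f(k^\a n + r)$ distribute into at most $d_k$ equivalence classes under $\simeq$. The largest class has size $\ge k^\a/d_k$, and within it the sorted residues $r_1 < r_2 < \dots < r_s$ must contain a consecutive pair with $r_{i+1}-r_i \le k^\a/(s-1)$, which is $\le 2d_k$ for $\a$ large. Moreover, at least half of the consecutive gaps in that class are at most twice the average, producing at least $k^\a/(4d_k)$ pairwise disjoint APs modulo $k^\a$ of small period $\le 2d_k$. A further pigeonhole on the (at most $2d_k$) possible period values gives a single period $h \in [1, 2d_k]$ witnessed by a union of APs modulo $k^\a$ of density $\ge 1/(8d_k^2)$, on which $f$ is asymptotically shift-invariant by $h$. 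Pigeonholing over $\a$, one may take $h$ fixed along an infinite sequence of scales $\a_1 < \a_2 < \dots$ The analogous construction in base $l$ produces a period $h'$ and a corresponding sequence of scales $\b_j$.

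The main obstacle is converting these two families of partial periodicities into a single global shift-invariance. At each scale, the partial shift-invariance by $h$ holds only on a set of density $\ge 1/(8d_k^2)$, and one cannot directly conclude that $\{n : f(n+h) \ne f(n)\}$ has density zero globally. The idea is to use the base-$l$ structure to cover $\NN_0$ by APs modulo $l^{\b_j}$ along which the base-$l$ periodicity propagates the partial base-$k$ information; here the density of the set of ratios $\{k^\a/l^\b : \a, \b \ge 0\}$ in $(0, \infty)$, a standard consequence of $\log k / \log l \notin \QQ$, is the essential geometric input, allowing the base-$k$ and base-$l$ APs to interlock finely. One then expects to obtain $f(n+m) = f(n)$ for almost all $n$ with $m := \lcm(h, h')$, or a suitable combination of $h$ and $h'$ extracted from an iterative argument. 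I anticipate that the detailed combinatorial bookkeeping at this combining step, rather than any single conceptual step, will be the technical crux, and I would rely on the representation of asymptotically automatic sequences supplied by Lemma~\ref{lem:ass-aut=>rep} to furnish the automaton-like framework in which the partial periods can be manipulated uniformly across both bases.
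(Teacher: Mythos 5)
Your proposal takes a genuinely different route from the paper's, and I want to flag both the difference and where I think it stops short.

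The paper's proof does \emph{not} try to extract small partial periods by pigeonhole on residues. Instead, via Lemma~\ref{lem:ass-aut=>rep} it represents the kernel structure by $k$- and $l$-automatic maps $\phi$, $\psi$ and then reduces to the \emph{classical} Cobham theorem: for each level set $\vec x$ of the vector kernel sequence $\vec f$ (suitably restricted to the set $X_*$ defined in \eqref{eq:79:def-X*}) one finds a companion $\vec y$ such that, for positive density of $n$, the value $f(k^\alpha l^\beta n + m)$ can be read off either as $x_{\phi'(m)}$ or as $y_{\psi'(m)}$. This forces the two automatic sequences in $m$ to agree, so Cobham's theorem makes $m \mapsto x_{\phi'(m)}$ eventually periodic (Lemma~\ref{lem:internal}). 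The remainder (Lemmas~\ref{lem:internal-2} and~\ref{lem:dens-union-1}) is a density-bookkeeping argument showing that the union of progressions on which this periodicity bites has full density. Your approach instead bypasses Cobham entirely: pigeonhole inside $\cN_k(f)/{\simeq}$ gives, at each scale $\a$, consecutive residues $r<r'$ mod $k^\a$ in the same kernel class with gap $h = r'-r \le 2d_k$, and one can indeed extract a fixed $h$ witnessed on a set of density $\ge c(d_k) > 0$ along infinitely many scales, and similarly in base $l$. The pieces you assemble are sound as stated.

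However, the combining step --- which you explicitly defer as ``detailed combinatorial bookkeeping'' --- is not a clean-up task; it is the proof. Here is the concrete obstruction. At scale $\a$ you obtain shift-invariance by $h$ on a union $S_\a$ of progressions mod $k^\a$ of density $\ge c>0$, but as $\a$ grows the sets $S_\a$ need not equidistribute: they are governed by the finitely many equivalence classes in $\cN_k(f)/{\simeq}$, so they may remain confined to a fixed portion of each dyadic block $[k^\a m, k^\a(m+1))$. The union $\bigcup_\a S_\a$ can therefore stall at density strictly below $1$, and nothing in your sketch rules this out. Moreover, the ``essential geometric input'' you invoke --- the density of $\{k^\a/l^\b\}$ in $(0,\infty)$ --- is only Kronecker's theorem, which is strictly weaker than Cobham; every known proof of Cobham uses this density fact plus substantial additional structure (e.g.\ syndeticity/recurrence arguments on the return sets). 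Your plan would need to recapitulate that additional structure in the asymptotic setting, but the proposal gives no indication of how, nor does it anticipate anything like the paper's Lemma~\ref{lem:internal} (the Cobham invocation) or Lemma~\ref{lem:dens-union-1} (the quantitative covering estimate). As written, the argument does not close; it reduces the theorem to a harder-looking problem whose solution is the theorem itself.
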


\begin{alphatheorem}\label{thm:main-asymmetric}
	Let $\Omega$ be a finite set, let $k,l \geq 2$ be multiplicatively independent integers and let $f \colon \NN_0 \to \Omega$ be a sequence. Suppose that $f$ is $k$-automatic and asymptotically $l$-automatic. Then $f$ is asymptotically equal to a periodic sequence.
\end{alphatheorem}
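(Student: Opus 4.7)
The plan is to derive Theorem B from Theorem A, using the stronger hypothesis of (genuine) $k$-automaticity, rather than merely asymptotic $k$-automaticity, to upgrade asymptotic shift-invariance to asymptotic periodicity. Since $f$ is $k$-automatic it is in particular asymptotically $k$-automatic, so Theorem A applies and yields some $m \geq 1$ with $S^m f \simeq f$. The goal then reduces to showing that $f$ is asymptotically equal to an $m$-periodic sequence.

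For each residue $i \in \{0, 1, \ldots, m-1\}$, let $g_i \colon \NN_0 \to \Omega$ be defined by $g_i(n) = f(mn+i)$. Each $g_i$ is $k$-automatic, since subsequences of $k$-automatic sequences along arithmetic progressions are again $k$-automatic. Moreover, the assumption $S^m f \simeq f$ entails $Sg_i \simeq g_i$: the set $E = \{n : f(n+m) \neq f(n)\}$ has natural density zero, so its preimage $\{n : mn+i \in E\}$ under $n \mapsto mn+i$ has density zero as well. It therefore suffices to prove the following \emph{base case}: each $g_i$ is asymptotically equal to some constant $c_i \in \Omega$. Once this is established, $\tilde f(n) := c_{n \bmod m}$ is an $m$-periodic sequence with $f \simeq \tilde f$, as required.

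The main difficulty lies in this base case, which asserts: if $g$ is $k$-automatic, asymptotically $l$-automatic (with $k,l$ multiplicatively independent), and satisfies $Sg \simeq g$, then $g$ is asymptotically constant. The obstacle is that $k$-automaticity together with unit-shift-invariance alone is \emph{not} enough: for instance, the $k$-automatic sequence $h(n) = \lfloor \log_k(n+1) \rfloor \bmod 2$ satisfies $Sh \simeq h$ (it changes value only at the positions $k^j - 1$, a set of density zero) and yet is not asymptotically constant, since its value densities on $[0, k^r)$ oscillate. Consequently the asymptotic $l$-automaticity must play a decisive role. The plan is to first transfer asymptotic $l$-automaticity from $f$ to each $g_i$ — a step that needs care when $\gcd(m,l)>1$, because the $l$-kernel of $g_i$ consists of subsequences of $f$ along progressions with common difference $m l^a$ rather than $l^a$ — and then to apply the machinery underlying Theorem A (or a refinement tailored to the unit-shift case) to conclude that any $k$-automatic, asymptotically $l$-automatic $g_i$ satisfying $Sg_i \simeq g_i$ is asymptotically constant. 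This rigidity step is where the multiplicative independence of $k$ and $l$ enters in an essential way.
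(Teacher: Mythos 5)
Your reduction to a unit-shift-invariant sequence is essentially the one the paper uses: it also applies Theorem~\ref{thm:main-symmetric} to obtain $S^q f \simeq f$ and then passes to $f(qn+r)$ to assume $q=1$, which is the same content as your decomposition into $g_i(n)=f(mn+i)$. Your counterexample $h(n)=\lfloor\log_k(n+1)\rfloor\bmod 2$, showing that $k$-automaticity together with $Sh\simeq h$ does \emph{not} force asymptotic constancy, is correct and is exactly the right observation: it pinpoints why the asymptotic $l$-automaticity must do genuine work in the remaining step.

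That remaining step, however, is where your proposal stops, and it is the entire content of Theorem~\ref{thm:main-asymmetric}. You state the ``base case'' --- a $k$-automatic, asymptotically $l$-automatic, unit-shift-invariant sequence is asymptotically constant --- but offer no proof, only the instruction to ``apply the machinery underlying Theorem A (or a refinement tailored to the unit-shift case).'' Reapplying Theorem~\ref{thm:main-symmetric} to $g_i$ only reproduces asymptotic shift-invariance, which you already have; a genuinely new argument is required, and the proposal does not supply one. For comparison, the paper's argument is: choose $c$ with $\bar d\bigl(f^{-1}(c)\bigr)>0$; by a pumping-type consequence of $k$-automaticity there exist $m,\alpha_0$ with $f\equiv c$ on every block $[k^\alpha m, k^\alpha(m+1))$ with $\alpha\in\alpha_0\NN$, i.e.\ $f(n)=c$ whenever $\log_K n$ lies in a fixed arc of $\RR/\ZZ$ (with $K=k^{\alpha_0}$); by asymptotic $l$-automaticity and pigeonhole there exist $\beta,\delta$ with $f(l^{\beta+\delta}n+1)=f(l^\beta n+1)$ for almost all $n$, which, combined with the unit-shift invariance, translates that arc by $\delta\log_K l \pmod 1$; iterating and using irrationality of $\log_K l$ (this is where multiplicative independence enters), finitely many translates of the arc cover $\RR/\ZZ$, so $f\simeq c$. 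Without this, or an equivalent rigidity argument, the proof has a genuine gap. (Your side concern about transferring asymptotic $l$-automaticity to $g_i$ when $\gcd(m,l)>1$ is legitimate but routine; the paper leaves it implicit as well, citing \cite[Thm.~6.8.2]{AlloucheShallit-book}.)
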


We note that the main result of \cite{ByszewskiKonieczny-2020-AA} follows directly from Theorem \ref{thm:main-asymmetric}.
\begin{corollary}[\cite{ByszewskiKonieczny-2020-AA}]
	Let $\Omega$ be a finite set, let $k,l \geq 2$ be multiplicatively independent integers and let $f,g \colon \NN_0 \to \Omega$ be sequences. Suppose that $f$ is $k$-automatic, $g$ is $l$-automatic and $f \simeq g$. Then $f$ and $g$ are asymptotically equal to a periodic sequence.
\end{corollary}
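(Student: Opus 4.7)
The plan is to reduce the corollary to Theorem \ref{thm:main-asymmetric} applied to $f$ (the analysis for $g$ is symmetric). The two hypotheses of Theorem \ref{thm:main-asymmetric} are that $f$ is $k$-automatic, which is given, and that $f$ is asymptotically $l$-automatic, which must be derived from the fact that $g$ is $l$-automatic and $f \simeq g$. Once both hypotheses are in place, Theorem \ref{thm:main-asymmetric} immediately yields a periodic sequence $\tilde f$ with $f \simeq \tilde f$, and then $g \simeq f \simeq \tilde f$ shows that $g$ is asymptotically equal to the same periodic sequence.

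The main step is therefore to verify that asymptotic equality transfers asymptotic automaticity, i.e.\ that $f \simeq g$ and $g$ being $l$-automatic imply $f$ is asymptotically $l$-automatic. This is the observation already flagged in the paper between Examples \ref{ex:gap} and \ref{ex:non}, namely the inequality $\abs{\cN_l(f)/{\simeq}} \leq \abs{\cN_l(g)}$. I would check it as follows: for any $\a, r$ with $r < l^\a$, the map $n \mapsto f(l^\a n + r)$ differs from $n \mapsto g(l^\a n + r)$ on a set of density zero (this is a routine consequence of the definition of $\simeq$, since the inverse image of a density zero set under the affine map $n \mapsto l^\a n + r$ still has density zero). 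Hence each element of $\cN_l(f)$ is $\simeq$-equivalent to an element of $\cN_l(g)$, and since $\cN_l(g)$ is finite, the quotient $\cN_l(f)/{\simeq}$ is finite as well.

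There is essentially no obstacle: the corollary is a direct combination of Theorem \ref{thm:main-asymmetric} with the elementary density-zero transfer principle above. The only care needed is to note that both $f$ and $g$ take values in the same finite set $\Omega$, so that all of the relevant definitions apply verbatim, and that one does not need a symmetric argument to conclude about $g$ — once $f \simeq \tilde f$ for a periodic $\tilde f$, the transitivity of $\simeq$ handles $g$ for free.
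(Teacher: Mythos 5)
Your proof is correct and follows exactly the paper's route: transfer asymptotic $l$-automaticity from $g$ to $f$ via $f \simeq g$, then invoke Theorem~\ref{thm:main-asymmetric}. The paper states the transfer step as a one-liner, while you spell out the density-zero argument for the affine maps $n \mapsto l^\a n + r$ and add the (correct, if implicit in the paper) observation that $g$ inherits the conclusion by transitivity of~$\simeq$.
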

\begin{proof}
	Since $f \simeq g$ and $g$ is $l$-automatic, $f$ is asymptotically $l$-automatic. Hence, $f$ is asymptotically equal to a periodic sequence by Theorem \ref{thm:main-asymmetric}.
\end{proof}

\color{fresh}

It is natural to ask if one can obtain a joint generalisation of Theorems \ref{thm:main-symmetric} and \ref{thm:main-asymmetric}, asserting that a sequence that is asymptotically automatic in two multiplicatively independent bases must be asymptotically equal to a periodic sequence. It turns out that such a generalisation is not possible, and in Section \ref{sec:Example} we give an example of a sequence that is asymptotically automatic in bases $2$ and $3$, but not asymptotically equal to a periodic sequence. 

\subsection{Future directions}

We recall that, as a direct consequence of Cobham's theorem, a sequence that is automatic in two multiplicatively independent bases is automatic in \emph{every} base. Similarly, it follows from Theorem \ref{thm:main-asymmetric} that a sequence that is automatic and asymptotically automatic in two multiplicatively independent bases is also asymptotically automatic in every base. On the other hand, we believe that the aforementioned sequence constructed in Section \ref{sec:Example} is not asymptotically $k$-automatic for any $k \in \NN$ with $\gcd(k,6) = 1$ (the proof of this statement remains elusive). Similarly, it is straightforward to generalise the construction in Section \ref{sec:Example} to more than two bases. However, a proof that the resulting sequence is asymptotically automatic in the appropriate bases would require either much more precise Diophantine approximation estimates or a completely new approach. This motivates us to pose the following questions, which are the simplest instances that are currently open.

\begin{question}
	Does there exist a sequence which is asymptotically automatic in bases $2$ and $3$, but not in base $5$?
\end{question}

\begin{question}
	Does there exist a sequence which is asymptotically automatic in bases $2$, $3$ and $5$, but not asymptotically equal to a periodic sequence?
\end{question}

\subsection*{Acknowledgements}
The author wishes to thank Boris Adamczewski, Oleksiy Klurman, Piotr Migda\l, Clemens M\"ullner and Lukas Spiegelhofer for helpful comments, and Terence Tao for providing the answer to the MathOverflow question \cite{Tao-MO}. 
The author works within the framework of the LABEX MILYON (ANR-10-LABX-0070) of Universit\'e de Lyon, within the program "Investissements d'Avenir" (ANR-11-IDEX- 0007) operated by the French National Research Agency (ANR). 
\section{Preliminaries}

\subsection{Density}\color{checked}
We let $\NN = \{1,2,3,\dots\}$ denote the set of positive integers and $\NN_0 = \NN \cup \{0\}$. By a slight abuse of notation, for $a,b \in \ZZ$ we let $[a,b) = \{a,a+1,\dots,b-1\}$ denote the integer (rather than real) interval. For a set $A \subset \NN_0$, we let $\bar{d},\underline{d}$ denote the lower and upper (asymptotic) densities of $A$,
\begin{align*}
	\bar{d}(A) &= \limsup_{N \to \infty} \frac{\abs{A \cap [0,N)}}{N},
	&&&
	\underline{d}(A) &= \liminf_{N \to \infty} \frac{\abs{A \cap [0,N)}}{N}.
\end{align*}
The two notions of density are connected by the relation $\underline{d}(A) = 1-\bar{d}(\NN_0 \setminus A)$. 
If $\bar{d}(A) = \underline{d}(A)$ then the common value is called the (asymptotic) density of $A$, and denoted by $d(A)$. The upper density is subadditive, meaning that 
\[ \bar{d}(A\cup B) \leq \bar{d}(A) + \bar{d}(B)\]
 for all $A,B \subset \NN_0$; the inequality can be strict even when $A$ and $B$ are disjoint. 

We will say that a statement $P(n)$ holds for \emph{(asymptotically) almost all} $n \in \NN_0$ if the set of $n \in \NN_0$ for which $P(n)$ is false has zero asymptotic density, i.e.,
\[
	\bar{d}\bra{\set{n \in \NN_0}{\neg P(n)}} = 0.
\]
More generally, if $A \subset \NN_0$ then $P(n)$ holds for \emph{almost all} $n \in A$ if the implication $n \in A \Rightarrow P(n)$ holds for almost all $n \in \NN_0$. In particular, if $\bar{d}(A) = 0$ then it is vacuously true that $P(n)$ holds for almost all $n \in A$.

For two sequences $f,g \colon \NN_0 \to \Omega$, where $\Omega$ is an arbitrary set, we will say that $f$ and $g$ are \emph{asymptotically equal}, denoted by $f \simeq g$, if $f(n) = g(n)$ for almost all $n \in \NN_0$. Note that {$\simeq$} is an equivalence relation on the set of all maps $\NN_0 \to \Omega$. Accordingly, we say that two sets $A,B \subset \NN_0$ are asymptotically equal, denoted by $A \simeq B$, if the corresponding indicator sequences are asymptotically equal, $1_A \simeq 1_B$. Equivalently, $A \simeq B$ if $\bar{d}\bra{(A \setminus B)\cup(B \setminus A)} = 0$.

We will use the following simple lemmas.
\begin{lemma}\label{lem:d_u-limit}
	Let $A_i \subset \NN_0$ for $i \in \NN_0$ be sets such that
	\begin{equation}\label{eq:30:1}
		\lim_{j \to \infty} \underline{d}\bra{\textstyle \bigcup_{i=0}^{j-1} A_i } = 1.
	\end{equation}
	Then for each set $B \subset \NN_0$ we have
	\begin{equation}\label{eq:30:2}
		\lim_{j \to \infty} \bar{d}\bra{\textstyle B \cap \bigcup_{i=0}^{j-1} A_i } = \bar{d}(B).
	\end{equation}
\end{lemma}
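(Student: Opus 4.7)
The plan is to denote $U_j = \bigcup_{i=0}^{j-1} A_i$ for convenience, and then establish \eqref{eq:30:2} by sandwiching $\bar{d}(B \cap U_j)$ between two quantities that both converge to $\bar{d}(B)$. The upper bound is immediate: since $B \cap U_j \subset B$, monotonicity of upper density gives $\bar{d}(B \cap U_j) \leq \bar{d}(B)$ for every $j$.

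For the lower bound I would use the decomposition $B = (B \cap U_j) \cup (B \setminus U_j)$ together with the subadditivity of $\bar{d}$ recalled just above the lemma, obtaining
\[
	\bar{d}(B) \leq \bar{d}(B \cap U_j) + \bar{d}(B \setminus U_j) \leq \bar{d}(B \cap U_j) + \bar{d}(\NN_0 \setminus U_j).
\]
By the relation $\underline{d}(A) = 1 - \bar{d}(\NN_0 \setminus A)$ applied to $A = U_j$, the hypothesis \eqref{eq:30:1} translates into $\bar{d}(\NN_0 \setminus U_j) \to 0$ as $j \to \infty$. Thus $\bar{d}(B) - \bar{d}(B \cap U_j) \leq \bar{d}(\NN_0 \setminus U_j) \to 0$, which combined with the upper bound yields $\bar{d}(B \cap U_j) \to \bar{d}(B)$, as required.

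There is no genuine obstacle in this argument; the only thing to be careful about is that one cannot conclude $\bar{d}(B \cap U_j) \to \bar{d}(B)$ by taking a limit inside $\bar{d}$ (upper density does not commute with monotone unions in general), so the argument must go through subadditivity applied to the \emph{complementary} piece $B \setminus U_j$, whose upper density is controlled directly by the hypothesis on $\underline{d}(U_j)$.
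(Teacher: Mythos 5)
Your proof is correct and follows essentially the same route as the paper's: the trivial upper bound $\bar{d}(B \cap U_j) \leq \bar{d}(B)$, then subadditivity of $\bar{d}$ applied to $B = (B\cap U_j) \cup (B\setminus U_j)$ together with the identity $\underline{d}(U_j) = 1 - \bar{d}(\NN_0\setminus U_j)$ to control the error term. The only cosmetic difference is that you spell out the decomposition explicitly, whereas the paper invokes subadditivity in one line.
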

\begin{proof}

	It is clear that $\bar{d}\bra{ B \cap \textstyle \bigcup_{i = 0}^{j-1} A_i} \leq \bar{d}(B)$ for each $j \in \NN$, so it will suffice to show that
	\begin{equation}\label{eq:30:3}
		\bar{d}\bra{ B \cap \textstyle \bigcup_{i = 0}^{j-1} A_i } \geq \bar{d}(B).
	\end{equation}
	Since $\bar{d}$ is subadditive, for each $j \in \NN$ we have
	\begin{equation}\label{eq:30:4}
		\liminf_{j \to \infty} \bar{d}\bra{ B \cap \textstyle \bigcup_{i = 0}^{j-1} A_i } \geq \bar{d}(B) - \bar{d}\bra{\NN \setminus \textstyle \bigcup_{i = 0}^{j-1} A_i} = \bar{d}(B) - 1 + \underline{d}\bra{\textstyle \bigcup_{i = 0}^{j-1} A_i}.
	\end{equation}
	Passing to the limit $j \to \infty$ and applying \eqref{eq:30:1} yields \eqref{eq:30:3}.	
\end{proof}

\begin{lemma}\label{lem:dbar-subseq}
	Let $(N_i)_{i=1}^\infty$ be a (strictly) increasing sequence of positive integers, and assume that
	\begin{equation}
		\lambda := \limsup_{i \to \infty} \frac{N_{i+1}}{N_i} < \infty.
	\end{equation}
	Then for each set $A \subset \NN_0$ we have
	\begin{equation}\label{eq:35:1}
		 \frac{ \bar d(A) }{\lambda} \leq \limsup_{i \to \infty} \frac{\abs{A \cap [0,N_i)}}{N_i} \leq \bar d(A).
	\end{equation}
\end{lemma}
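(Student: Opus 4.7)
The upper bound is essentially immediate: since $(N_i)$ is a subsequence of $\NN$, the limsup of $|A \cap [0,N_i)|/N_i$ along $i$ is at most the limsup of $|A \cap [0,N)|/N$ along all $N$, which is $\bar d(A)$ by definition. So all the work is in the lower bound.

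For the lower bound, my plan is to choose, for each $N$, a suitable $N_i$ slightly larger than $N$ and exploit the gap bound. Concretely, fix any $\lambda' > \lambda$. By definition of $\limsup$, there exists $i_0$ such that $N_{i+1}/N_i \leq \lambda'$ for all $i \geq i_0$. Next, use the definition of $\bar d(A)$ to pick a sequence $M_k \to \infty$ with
\[
	\lim_{k \to \infty} \frac{\abs{A \cap [0,M_k)}}{M_k} = \bar d(A).
\]
For each $k$ large enough that $M_k > N_{i_0}$, let $i_k$ be the smallest index with $N_{i_k} \geq M_k$; then $i_k > i_0$ and $N_{i_k - 1} < M_k$, which combined with the gap bound gives $N_{i_k} \leq \lambda' N_{i_k-1} < \lambda' M_k$.

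Because $[0,M_k) \subset [0,N_{i_k})$, the numerator only grows, so
\[
	\frac{\abs{A \cap [0,N_{i_k})}}{N_{i_k}} \geq \frac{\abs{A \cap [0,M_k)}}{\lambda' M_k}.
\]
Passing to the limit $k \to \infty$ yields
\[
	\limsup_{i \to \infty} \frac{\abs{A \cap [0,N_i)}}{N_i} \geq \frac{\bar d(A)}{\lambda'},
\]
and since $\lambda' > \lambda$ was arbitrary, letting $\lambda' \to \lambda$ gives the desired inequality $\bar d(A)/\lambda$.

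I do not anticipate a real obstacle here; the only mildly delicate point is keeping track of which $\limsup$ is being taken along which sequence, and remembering that $\limsup N_{i+1}/N_i \leq \lambda$ only guarantees $N_{i+1}/N_i \leq \lambda'$ eventually, not uniformly, which is why the argument is phrased with an arbitrary $\lambda' > \lambda$ and a final passage to the limit.
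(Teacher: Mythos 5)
Your proof is correct and follows essentially the same route as the paper: for a given $N$ (your $M_k$), locate the smallest $N_i \geq N$, use the gap bound to control $N_i/N$, and compare the densities over $[0,N)$ and $[0,N_i)$. The paper phrases this more compactly via subadditivity of $\limsup$ for products, while you spell out the $\lambda' > \lambda$ epsilon-management explicitly, but the underlying idea is identical.
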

\begin{proof}
	Set $N_0 := 0$. For each $N \in \NN$, there exists $i(N) \in \NN$ such that $N \in [N_{i(N)-1}, N_{i(N)})$. Hence,
	\begin{align*}
		\bar{d}(A) & = \limsup_{N \to \infty} \frac{A \cap [0,N)}{N} 
		\leq 
		 \limsup_{N \to \infty} \frac{\abs{A \cap [0,N_{i(N)})} }{N_{i(N)}} \cdot \frac{N_{i(N)}}{N} 
		 \\& \leq
		 \lambda \limsup_{i \to \infty} \frac{\abs{A \cap [0,N_i)}}{N_i}. 
	\end{align*}
	This proves the fist of the two inequalities in \eqref{eq:35:1}; the second one is immediate.
\end{proof}

\subsection{Words} 
For $k \in \NN$ we let $\Sigma_k = \{0,1,\dots,k-1\}$ denote the set of base-$k$ digits. We let $\Sigma_k^* = \bigcup_{\alpha=0}^\infty \Sigma_k^\a$ denote the set of all words over $\Sigma_k$, including the empty word, which is denoted by $\epsilon$. For $u \in \Sigma_k^*$ we let $\abs{u}$ denote the length of $u$, and for $u,v \in \Sigma_k^*$ we let $uv$ denote the concatenation of $u$ and $v$. For $u \in \Sigma_k^*$ we let $[u]_k \in \NN_0$ denote the corresponding integer. Conversely for $n \in \NN_0$ we let $(n)_k \in \Sigma_k^*$ denote the base-$k$ expansion of $n$ (with no leading zeros). In particular, $(0)_k = \epsilon$. Slightly more generally, for $n \in \NN_0$ and $\alpha \in \NN_0$, we let $(n)_k^\a \in \Sigma_k^\a$ denote the base-$k$ expansion of $n \bmod k^\a$, padded with leading zeroes to length $\a$. Thus, for instance, $(11)_2 = (11)_2^4 = 1011$, while $(11)_2^3 = (3)_2^3 = 011$. 

For a map $\phi \colon \Sigma_k^* \to \Omega$, we define the $k$-kernel
\[
	\cN_k(\phi) = \set{ \Sigma_k^* \ni u \mapsto \phi(uv) \in \Omega}{ v \in \Sigma_k^*}.
\]
The map $\phi$ is said to be $k$-automatic if and only if its $k$-kernel is finite, $\abs{\cN_k(\phi)} < \infty$. The following lemma provides an alternative description of asymptotically $k$-automatic sequences $\NN_0 \to \Omega$, which will be helpful in applications.
\begin{lemma}\label{lem:ass-aut=>rep}
	Let $\Omega$ be a finite set, let $k \geq 2$ be an integer and let $f \colon \NN_0 \to \Omega$ be a sequence. Then the following conditions are equivalent.
\begin{enumerate}
\item\label{cond:ass-aut:A} $f$ is asymptotically $k$-automatic;
\item\label{cond:ass-aut:B} there exists $d \in \NN$, $f_0,f_1,\dots,f_{d-1} \colon \NN_0 \to \Omega$ and a $k$-automatic map $\phi \colon \Sigma_k^* \to \Sigma_d$ such that for each $u \in \Sigma_k^*$ with length $\a := \abs{u}$ we have
\begin{equation}\label{eq:f-and-phi}
		f\bra{k^\a n + [u]_k} = f_{\phi(u)}(n) \text{ for almost all } n \in \NN_0.	
\end{equation}
\end{enumerate}
\end{lemma}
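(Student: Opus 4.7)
The plan is to establish the two implications separately. The easier direction, (ii) $\Rightarrow$ (i), is essentially immediate: under the assumption of (ii), every element of the $k$-kernel $\cN_k(f)$ has the form $n \mapsto f(k^\a n + [u]_k)$ for some $u \in \Sigma_k^*$ with $|u| = \a$, and \eqref{eq:f-and-phi} asserts that this sequence is asymptotically equal to $f_{\phi(u)}$. Since $\phi$ takes values in the $d$-element set $\Sigma_d$, it follows that $\cN_k(f)$ has at most $d$ asymptotic equivalence classes, so $f$ is asymptotically $k$-automatic.

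For the non-trivial direction (i) $\Rightarrow$ (ii), I would set $d := |\cN_k(f)/{\simeq}|$, choose representatives $f_0, f_1, \dots, f_{d-1} \colon \NN_0 \to \Omega$ of the $d$ asymptotic equivalence classes of $\cN_k(f)$, and define $\phi \colon \Sigma_k^* \to \Sigma_d$ by letting $\phi(u)$ be the unique index $i$ such that the sequence $g_u(n) := f(k^{|u|} n + [u]_k)$ satisfies $g_u \simeq f_i$. The relation \eqref{eq:f-and-phi} then holds by construction, and all that remains is to verify that $\phi$ is $k$-automatic.

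The main point, and the only mildly technical step, is the following preservation property of asymptotic equality under restrictions to arithmetic progressions whose common difference is a power of $k$: if $h_1 \simeq h_2$, $\a \in \NN_0$, and $r < k^\a$, then $h_1(k^\a n + r) \simeq h_2(k^\a n + r)$. This holds because the set of ``bad'' $n$ in the latter relation is the preimage of the bad set for $h_1 \simeq h_2$ under the affine map $n \mapsto k^\a n + r$, and therefore has upper density at most $k^\a$ times the original, which is zero. Combining this with the elementary identity $g_{uv}(n) = g_v(k^{|u|} n + [u]_k)$, which follows from $|uv| = |u| + |v|$ and $[uv]_k = [u]_k k^{|v|} + [v]_k$, one concludes that whenever $\phi(v) = \phi(v')$ we have $g_v \simeq g_{v'}$, hence $g_{uv} \simeq g_{uv'}$ for every $u \in \Sigma_k^*$, and therefore $\phi(uv) = \phi(uv')$. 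Thus the map $u \mapsto \phi(uv)$ depends on $v$ only through $\phi(v) \in \Sigma_d$, which gives $|\cN_k(\phi)| \leq d < \infty$ and completes the proof.
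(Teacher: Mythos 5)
Your proof is correct and follows essentially the same route as the paper: the easy direction is identical, and for (i) $\Rightarrow$ (ii) you use the same choice of $d$, the same representatives, the same definition of $\phi$, and the same observation that $\phi(v)=\phi(v')$ forces $\phi(uv)=\phi(uv')$ via the decomposition of $g_{uv}$ through $g_v$. The only (welcome) difference is that you state explicitly the lemma that $\simeq$ is preserved under the substitution $n \mapsto k^{\a}n+r$, a fact the paper uses silently when applying \eqref{eq:f-and-phi} twice.
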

\begin{proof}
	If \eqref{cond:ass-aut:B} holds then for each $f' \in \cN_k(f)$ there exists $i \in \Sigma_d$ such that $f' \simeq f_i$. Hence, $\abs{\cN_k(f)/{\simeq}} \leq d$ and in particular $f$ is asymptotically $k$-automatic. 
	
	Next, suppose that $f$ is asymptotically $k$-automatic. Let $d := \abs{\cN_k(f)/{\simeq}} < \infty$ and let $f_0,f_1,\dots,f_{d-1} \in \cN_k(f)$ be representatives of the equivalence classes. Note that for each  $u \in \Sigma_k^*$ and $\a := \abs{u}$, the sequence $f'$ given by $f'(n) := f\bra{k^\a n + [u]_k}$ belongs to $\cN_k(f)$. Hence, there exists $i \in \Sigma_d$ such that 
\begin{equation}\label{eq:f-and-phi-2}
		f\bra{k^\a n + [u]_k} = f_{i}(n) \text{ for almost all } n \in \NN_0.	
\end{equation}
Since $f_i \not \simeq f_j$ for $j \neq i$, the value of $i$ is uniquely determined by $u$. Define the map $\phi$ by setting, with the notation as above, $\phi(u) := i$. This guarantees that \eqref{eq:f-and-phi} holds, and it remains to verify that $\phi$ is $k$-automatic. 

	Let $u,v \in \Sigma_k^*$ and put $\a:= \abs{u},\ \b := \abs{v}$. Applying \eqref{eq:f-and-phi} twice, we find that
\begin{equation}\label{eq:f-and-phi-3}
		f_{\phi(uv)}(n) = f\bra{k^{\a+\b} n + [uv]_k} = f_{\phi(v)}\bra{k^\a n + [u]_k}  \text{ for almost all } n \in \NN_0.	
\end{equation}	
As a consequence, for all $v,v' \in \Sigma_k^*$ with $\phi(v') = \phi(v)$ we also have $\phi(uv) = \phi(uv')$ for all $u \in \Sigma_k^*$. In particular, $\abs{\cN_k(\phi)} \leq d$, meaning that $\phi$ is $k$-automatic.
\end{proof}
 
\section{Examples in base 2}\label{sec:Example-2}

In this section we verify the properties of sequences in Examples \ref{ex:leading-prime} and \ref{ex:gap}.

\begin{proposition}\label{prop:leading-prime}\color{checked}
	The sequence $f$ from Example \ref{ex:leading-prime} is asymptotically $2$-automatic but not asymptotically equal to a $2$-automatic sequence.
\end{proposition}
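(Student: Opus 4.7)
My plan is to prove the two halves of the proposition separately. For the asymptotic $2$-automaticity of $f$, I would show directly that every element of $\cN_2(f)$ is asymptotically equal to $f$ itself, so that $|\cN_2(f)/{\simeq}| = 1$. Fix $\alpha \in \NN_0$ and $0 \leq r < 2^\alpha$. If $n \in \NN_0$ is not of the form $2^m - 1$ (a density-zero condition on $n$), then its binary expansion has the shape $1^{\lini(n)} 0 w$ for some $w \in \Sigma_2^*$, so $(2^\alpha n + r)_2 = 1^{\lini(n)} 0 w (r)_2^\alpha$ and hence $\lini(2^\alpha n + r) = \lini(n)$. This immediately gives $f(2^\alpha n + r) = f(n)$ for almost all $n$.

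For the negative part, I would argue by contradiction: assume $\tilde f \simeq f$ for some $2$-automatic $\tilde f$, computed by a DFAO $(Q, \delta, q_0, \omega)$ reading binary expansions most-significant-bit-first, and write $A := f^{-1}(1)$, $\tilde A := \tilde f^{-1}(1)$. For $k \geq 1$ set $q^{(k)} := \delta(q_0, 1^k 0) \in Q$; the behaviour of $\tilde f$ on $A_k := \{n : \lini(n) = k\}$ is governed by $q^{(k)}$, since away from the single exception $n = 2^k-1$ every $n \in A_k$ has the form $[1^k 0 b]_2$ with $b \in \Sigma_2^*$ and is read by the automaton starting in state $q^{(k)}$. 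Introduce the averaged acceptance rates $y_L(q) := 2^{-L} |\{b \in \Sigma_2^L : \omega(\delta(q, b)) = 1\}|$. Since $A = \bigsqcup_{p \text{ prime}} A_p$ up to a density-zero set and $A \simeq \tilde A$, a short density computation based on the geometric contribution of each scale $L$ yields, for every prime $p$ and every composite $k$,
\[
	\bar d(A_p \setminus \tilde A) \geq \frac{1 - \liminf_{L} y_L(q^{(p)})}{2^{p+1}}, \qquad \bar d(A_k \cap \tilde A) \geq \frac{\limsup_{L} y_L(q^{(k)})}{2^{k+1}}.
\]
Both upper densities vanish by hypothesis, forcing $y_L(q^{(p)}) \to 1$ for all primes $p$ and $y_L(q^{(k)}) \to 0$ for all composites $k$.

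The final contradiction exploits the finiteness of $Q$: the sequence $(q^{(k)})_{k \geq 1}$ is eventually periodic with some period $T$ and threshold $K$. Choosing $k_0 \geq K$ with $\gcd(k_0, T) = 1$ (which exists since $\phi(T) \geq 1$), Dirichlet's theorem on primes in arithmetic progressions guarantees that the progression $\{k_0 + jT : j \geq 0\}$ contains infinitely many primes, while it also contains infinitely many composites since primes have density zero. All indices in this progression share the common state $q^{(k_0)}$, so $y_L(q^{(k_0)})$ would have to tend simultaneously to $1$ and to $0$, absurd.

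The main obstacle is the density estimate connecting $\bar d(A_k \setminus \tilde A) = 0$ (resp.\ $\bar d(A_k \cap \tilde A) = 0$) to convergence of $y_L(q^{(k)})$ to $1$ (resp.\ to $0$); a priori the sequence $(y_L)_L$ could oscillate, but the key observation is that at each fixed scale $L$ the block $A_k \cap [2^{k+L}, 2^{k+L+1})$ already contributes a term of order $2^{-k-1}(1 - y_L(q^{(k)}))$ or $2^{-k-1} y_L(q^{(k)})$ to the upper density, so any single deviation is detected. The remaining steps---eventual periodicity of $q^{(k)}$ and the Dirichlet-theorem invocation---are routine, and the appeal to Dirichlet could likely be replaced by a slightly more elementary combinatorial argument.
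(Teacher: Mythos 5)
Your proposal is correct on both halves. The positive direction is essentially the paper's own argument: both observe that away from a density-zero exceptional set, multiplying by $2^\alpha$ and adding $r < 2^\alpha$ does not change the number of leading $1$s. The negative direction, however, takes a genuinely different route. The paper invokes the pumping lemma to obtain a period $\delta$ for the state reached after reading $1^\pi$, and then derives a contradiction from a single prime $\pi \geq \delta$ with $\pi + \delta$ composite (such a $\pi$ exists by an elementary observation: an arithmetic progression of primes with common difference $\delta$ cannot be infinite), directly comparing $f$ and $\tilde f$ on a pair of intervals. You instead work explicitly with the DFAO, introduce the scale-$L$ acceptance rates $y_L(q)$, show via a density computation that they must tend to $1$ along the state trajectory for prime indices and to $0$ for composite ones, then exploit eventual periodicity of the state sequence $\bigl(q^{(k)}\bigr)_k$ together with Dirichlet's theorem to find one state hit by both kinds of index. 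Both proofs rest on the same structural fact---eventual periodicity of the automaton state after reading $1^k$---but the paper's argument is shorter and more self-contained (no appeal to Dirichlet, no auxiliary rate functions), while yours makes the probabilistic structure of the automaton more explicit and is perhaps closer to a reusable lemma about DFAOs restricted to the family of cylinders $\{n : \lini(n) = k\}$. As you note yourself, your use of Dirichlet is an overkill: once you know $(q^{(k)})$ is eventually periodic with period $T$, the pigeonhole principle already places infinitely many primes in some residue class modulo $T$, and the corresponding progression contains composites for elementary reasons, so the appeal to Dirichlet can be stripped out without loss.
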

\begin{proof}\color{checked}
In order to show that $f$ is asymptotically $2$-automatic, note that for an integer $n \in \NN_0$ we have $f(2n) = f(2n+1) = f(n)$ as long as the binary expansion of $n$ contains at least one zero, i.e., as long as $n+1$ is not a power of $2$. It follows that $\abs{\cN_2(f)/{\simeq}} = 1$.

	Next, we show that $f$ is not asymptotically equal to a $2$-automatic sequence. Suppose, for the sake of contradiction, that $f \simeq \tilde f$ for a $2$-automatic sequence $\tilde f$. It follows from the pumping lemma \cite[Lemma 4.2.1]{AlloucheShallit-book} that there exists an integer $\delta \in \NN$ such that for each $\pi \in \NN$ with $\pi \geq \delta$, and for each $\alpha,m \in \NN_0$ with $m < 2^\a$ we have
	\begin{equation}\label{eq:78:1}
		\tilde f\bra{ (2^{\pi}-1)2^\a + m} = \tilde f\bra{ (2^{\pi+\delta}-1)2^\a + m}
	\end{equation}
	Pick a prime $\pi \geq \delta$ such that $\pi+\delta$ is not a prime. Then, for $\a,m \in \NN_0$ with $m < 2^{\a-1}$ we have
	\begin{align}\label{eq:78:2}
		\lini( (2^{\pi}-1)2^\a + m) &= \pi, &&& \lini( (2^{\pi+\delta}-1)2^\a + m) &= \pi+\delta,
	\end{align}
	and consequently
	\begin{align}\label{eq:78:3}
		f( (2^{\pi}-1)2^\a + m) = 1 &\neq 0 = f( (2^{\pi+\delta}-1)2^\a + m).
	\end{align}
	It follows from \eqref{eq:78:1} and \eqref{eq:78:3} that $f(n) \neq \tilde f(n)$ for exactly one $n \in \{	(2^{\pi}-1)2^\a + m, (2^{\pi+\delta}-1)2^\a + m\}$. Thus,
	\begin{align}\label{eq:78:4}
		\abs{ \set{n < 2^{\pi+\delta+\a} }{ f(n) \neq \tilde f(n)}} \geq 2^{\alpha-1}.
	\end{align}
	Letting $\a \to \infty$ (with $\delta$ and $\pi$ fixed) we conclude that
	\begin{align}\label{eq:78:5}
		\bar{d}\bra{ \set{n \in \NN_0 }{ f(n) \neq \tilde f(n)}} \geq 1/2^{\pi+\delta+1} > 0,
	\end{align}
	which contradicts the assumption that $f \simeq \tilde f$ and completes the proof.	
\end{proof}

\begin{proposition}
	The sequence $f$ from Example \ref{ex:gap} is asymptotically $2$-automatic but not asymptotically equal to a $2$-automatic sequence.
\end{proposition}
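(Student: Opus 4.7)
The plan is to verify the two assertions separately, in the style of Proposition \ref{prop:leading-prime}. For asymptotic $2$-automaticity, I would show that for every $\alpha \in \NN$ and $r \in [0, 2^\alpha)$, the identity $f(2^\alpha n + r) = f(n)$ holds for almost all $n$. Letting $\mathrm{tr}(n)$ denote the number of trailing $1$s in $(n)_2$, the crux will be that $\lmax(n) > \mathrm{tr}(n) + \alpha$ for almost all $n$: granted this, since $(2^\alpha n + r)_2$ is the concatenation of $(n)_2$ with $(r)_2^\alpha$, the junction run has length at most $\mathrm{tr}(n) + \alpha < \lmax(n)$ and any internal run inside $(r)_2^\alpha$ has length at most $\alpha < \lmax(n)$, so $\lmax(2^\alpha n + r) = \lmax(n)$. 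The density claim would follow by partitioning $\NN_0$ according to the value of $\mathrm{tr}(n) = k$: for each fixed $k$, conditional on $\mathrm{tr}(n) = k$, the ``bulk'' $n'' = \lfloor n / 2^{k+1} \rfloor$ has $\lmax(n'') \leq k + \alpha$ with density $0$ (the classical bound: the count of length-$L$ binary strings with no $M + 1$ consecutive $1$s is $O(\lambda_M^L)$ for some $\lambda_M < 2$), while $\{n : \mathrm{tr}(n) > K\}$ has density $2^{-K-1}$ for each $K$.

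For the second assertion I would argue by contradiction: supposing $f \simeq \tilde f$ for some $2$-automatic $\tilde f$, the pumping lemma applied to interior $1$-runs would give $\pi_0, \delta \geq 1$ such that for every pair of binary words $u, v$ and every $\pi \geq \pi_0$,
\[
    \tilde f\bra{[u \cdot 1^\pi \cdot v]_2} = \tilde f\bra{[u \cdot 1^{\pi + \delta} \cdot v]_2}.
\]
My goal will be to locate a set $B \subseteq \NN_0$ of positive upper density such that, for each $n \in B$, some companion $n'$ obtained by pumping as above satisfies $\tilde f(n) = \tilde f(n')$ yet $f(n) \neq f(n')$; then at least one of $\{n, n'\}$ must belong to $\{k : f(k) \neq \tilde f(k)\}$, forcing the latter set to have positive upper density and contradicting $f \simeq \tilde f$.

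The construction of $n'$ will split by the parity of $\delta$. When $\delta$ is odd, I would take $B = \{n : \lmax(n) \geq \pi_0\}$, which has density $1$ by Part 1's density claim, and pump the longest $1$-run of $n$; the new longest run has length $\lmax(n) + \delta$, of opposite parity, giving $f(n) \neq f(n')$. When $\delta$ is even, I would take $B$ to consist of those $n$ whose binary expansion carries both a longest run of length $L := \lmax(n) \geq \pi_0 + 1$ and a separate $1$-run of length $L - 1$, and pump the $(L-1)$-run: it becomes a run of length $L - 1 + \delta > L$ (since $\delta \geq 2$), so $\lmax(n') = L - 1 + \delta$, and because $\delta$ is even this differs in parity from $L$, again forcing $f(n) \neq f(n')$. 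A standard Poisson-type analysis of run-length statistics for a uniformly random binary string of length $A$ shows that this $B$ has positive density, uniformly in $A$ for $A$ large. Counting pairs $(n, n')$ with $n \in B \cap [2^{A-1}, 2^A)$ then yields $\bar d(\{k : f(k) \neq \tilde f(k)\}) \geq c / 2^{\delta + 1} > 0$. The technical heart of the argument will be the $\delta$-even case: establishing a uniform-in-$A$ positive lower bound on the fraction of length-$A$ binary strings simultaneously carrying a longest run of length $L$ and a separate run of length $L - 1$ --- a statement following from the Poisson approximation for the number of runs of each length near $\log_2 A$ (in the spirit of Erd\H{o}s--R\'ev\'esz and Schilling), but needing to be made quantitative.
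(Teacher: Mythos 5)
Your Part~1 argument is essentially the paper's, reorganised: you quantify the condition for $\lmax(2^\alpha n + r) = \lmax(n)$ in terms of trailing $1$-runs, whereas the paper derives the same fact from the recursion $\lmax(2n+1) = \lmax(n)$ away from $n \equiv -1 \bmod 2^{\lmax(n)}$; both boil down to the density-zero estimates $\bar d(\{n : \lmax(n) < H\}) = 0$ and $\bar d(\{n \equiv -1 \bmod 2^{K+1}\}) = 2^{-(K+1)}$, and both are correct.

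Part~2, however, has a genuine gap, and it is exactly the one you flag. Your odd-$\delta$ branch is sound (the pumping constant can be taken as $\delta = d!$ for a $d$-state automaton, so it applies uniformly in $u,v$, and the map $n \mapsto n'$ is injective because the pumped run is the unique longest run of $n'$). But your even-$\delta$ branch rests on the claim that a positive proportion of $n$ carry both a longest run of length $L = \lmax(n) \geq \pi_0 + 1$ \emph{and} a disjoint run of length $L - 1$. That is plausible from the Erd\H{o}s--R\'ev\'esz / Schilling picture of run-length statistics, but the oscillating, non-convergent distribution of $\lmax$ for uniformly random length-$A$ strings means a ``uniformly in $A$'' positive lower bound is genuinely delicate, and you do not prove it. Since $\delta = d!$ is typically even, the even case cannot be side-stepped, so without this lemma the contradiction does not close. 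The paper takes a different and shorter route that avoids run-length concentration entirely: it first upgrades the almost-everywhere relations $\tilde f(2n) = \tilde f(2n+1) = \tilde f(n)$ to an everywhere statement on the set of $n$ whose expansion contains a fixed word $w$ (via \cite[Thm.\ 8.6.3]{AlloucheShallit-book}), deduces that $f$ would have to be asymptotically constant, and then refutes constancy with the explicit injective map $D$ that inserts one extra $1$ into the earliest longest run, noting $\lmax(D(n)) = \lmax(n) + 1$ and $D(n) \leq 3n$. That single parity-flipping map replaces your entire even-$\delta$ density analysis with a two-line counting argument giving $\underline d(\{n : f(n) \neq c\}) \geq 1/6$. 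If you want to keep your pumping framework, you could likewise replace the even-$\delta$ branch by proving non-constancy via such a map; otherwise you must actually establish the positive-density claim for $B$, which is the missing (and hardest) step.
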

\begin{proof}
To begin with, note that $\lmax$ satisfies the following recursive relations:
	\begin{align}\label{eq:26:1}
	\lmax(2n) &= \lmax(n), &&&
	\lmax(2n+1) &= 
	\begin{cases}
		\lmax(n) + 1 & \text{if } n \equiv -1 \bmod 2^{\lmax(n)}, \\
		\lmax(n) & \text{otherwise.}
	\end{cases} 
	\end{align}

	Let $H \in \NN_0$. If $\lmax(2n+1) \neq \lmax(n)$, then it must hold that $n \equiv -1 \bmod{2^H}$ or $\lmax(n) < H$. Hence,
	\begin{align}\label{eq:26:2}
		\bar{d}\bra{\set{n \in \NN_0}{\lmax(2n+1) \neq \lmax(n)}} \leq 
		\bar{d}\bra{2^H\NN-1} + \bar{d}\bra{\set{n \in \NN_0}{\lmax(n)<H}}.
	\end{align}
	Clearly, $\bar{d}\bra{2^H\NN-1} = 2^{-H}$. It well-known that for each word $w \in \Sigma_2^*$, for almost all $n \in \NN_0$, $w$ is a subword of $(n)_2$ (see e.g.\ \cite[Thm.\ 8.6.3]{AlloucheShallit-book}). Applying this observation with $w = 1^H$, we conclude that 
	\begin{align}\label{eq:26:dens-0}	
	\bar{d}\bra{\set{n \in \NN_0}{\lmax(n)<H}} = 0.
	\end{align}
	 Inserting these values into \eqref{eq:26:2} and letting $H \to \infty$, we conclude that
	\begin{equation}
		\bar{d}\bra{\set{n \in \NN_0}{\lmax(2n+1) \neq \lmax(n)}} = 0.
	\end{equation}
	Thus, an inductive argument shows that each sequence in $\cN_2(\lmax)$ is asymptotically equal to $\lmax$. It follows that $\abs{\cN_2(f)/{\simeq}} = \abs{\cN_2(\lmax)/{\simeq}} = 1$. In particular, $f$ is asymptotically $2$-automatic.

	It remains to show that $f$ is not asymptotically equal to a $2$-automatic sequence. Suppose, for the sake of a contradiction, that $f \simeq \tilde f$ for a $2$-automatic sequence $\tilde f$. Since we have already established that
	\begin{equation}
		f(2n) = f(2n+1) = f(n) \text{ for almost all $n \in \NN_0$},
	\end{equation}
	it follows that we also have
	\begin{equation}
		\tilde f(2n) = \tilde f(2n+1) = \tilde f(n) \text{ for almost all $n \in \NN_0$}.
	\end{equation}
	As a consequence, there exists a word $w \in \Sigma_2^*$ such that 
	\begin{equation}\label{eq:49:5}
		\tilde f(2n) = \tilde f(2n+1) = \tilde f(n) \text{ for all $n \in \NN_0$ such that $w$ is a subword of $(n)_2$}.
	\end{equation}
	(This follows  e.g.\ from \cite[Thm.\ 8.6.3]{AlloucheShallit-book} applied to the $2$-automatic sequence $\NN_0 \to \Omega^3$, $n \mapsto \brabig{\tilde f(n),\tilde f(2n),\tilde f(2n+1)}$). Replacing $w$ with $1w$ if necessary, we may freely assume that $w$ begins with $1$. Let $m = [w]_2$ and $c = f(m)$.

	Our next aim is to show that $f \simeq c$. It follows from a repeated application of \eqref{eq:49:5} that 
	\begin{equation}\label{eq:49:6}
	 	\tilde f\bra{2^\a m + n}  = c \text{ for all } \alpha,n \in \NN_0 \text{ with } n < 2^\a.
	\end{equation}
	Let $\a \in \NN$ be arbitrary, and let $n < 2^{\a-1}$. If $f(n) \neq c$ then either $f(n) \neq f(2^\a m +n)$ or $f(2^\a m + n) \neq c = \tilde f(2^\a m + n)$. Since $\lmax(2^\a m + n) = \max\bra{\lmax(n),\lmax(m)}$, if  $f(n) \neq f(2^\a m + n)$ then $\lmax(n) < \lmax(m)$. Hence, 
\begin{align*}
	\abs{ \set{n \in [0,2^{\a-1}) }{f(n) \neq c} } & \leq 
	\abs{ \set{n \in [0,2^{\a-1}) }{ \lmax(n) < \lmax(m)} } 
	\\ & \phantom{\leq} +
	\abs{ \set{n' \in [0, 2^{\a}(m+1) ) }{ f(n') \neq \tilde f(n') }}.
\end{align*} 
	Dividing by $2^{\a-2}$ and letting $\a \to \infty$ we conclude that (cf.{} Lemma \ref{lem:dbar-subseq})
\begin{align*}
	\bar d\bra{ \set{n \in \NN_0 }{f(n) \neq c} } & \leq 
	2 \bar d\bra{ \set{n \in \NN_0 }{ \lmax(n) < \lmax(m)} } 
	\\ & \phantom{\leq} +
	4 (m+1) \bar d\bra{ \set{n' \in \NN_0 }{ f(n') \neq \tilde f(n') }}.
\end{align*} 	
	The first summand on the right-hand side is zero by \eqref{eq:26:dens-0}. The second summand is zero by the definition of $\tilde f$. Thus, $f \simeq c$.
	
	In order to reach a contradiction (and hence also the end of the proof) it remains to show that $f \not \simeq c$, that is, that $f$ is not asymptotically constant. This can be inferred from an explicit description of the statistical behaviour of $f$, which can be found in \cite{Tao-MO}. In order to keep the argument as self-contained as possible, we also include a short elementary proof that $f$ is not asymptotically constant.
	
	Consider the map $D \colon \NN_0 \to \NN_0$ defined as follows. Let $D(0) = 1$, and for $n > 0$, let $D(n)$ be the integer obtained by appending an additional $1$ to the earliest block of consecutive $1$s in the binary expansion of $n$ which has maximal length. To be more explicit, we may write the binary expansion of $n$ in the form
	\begin{align}\label{eq:59:1}
		(n)_2 = u 1^{\lmax(n)} v,
	\end{align}
	where $\lmax([u]_2) < \lmax(n)$, $u \in \{\epsilon\} \cup \Sigma_k^*0$, $v \in \{\epsilon\} \cup 0 \Sigma_k^*$. Note that $u$ and $v$ are uniquely determined by $n$. With this notation, we put
	\[
		D(n) := [u 1^{\lmax(n)+1} v]_2.
	\]
	
	It follows directly from the construction that $\lmax(D(n)) = \lmax(n) + 1$. In particular, $f(D(n)) \neq f(n)$. We also note that $D(n)$ contains exactly one block of $\lmax(n) + 1$ consecutive $1$s, and as a consequence $D$ is injective (indeed, $n$ can be recovered from $D(n)$ by deleting a $1$ from the longest block of consecutive $1$s). Next, for all $n > 0$ we have the estimate (using notation from \eqref{eq:59:1})
	\begin{align}\label{eq:59:2}
		D(n) = 2(n-[v]_k) + k^{\abs{v}} + [v]_k \leq 3n.
	\end{align}	 
	 Combining the observations above, we can estimate
	\begin{align*}
		N &\leq \abs{\set{n \in [0,N)}{f(n) \neq c}} + \abs{\set{n \in [0,N)}{f(D(n)) \neq c}}
		\\&\leq 2 \abs{\set{n \in [0,3N)}{f(n) \neq c}}.
	\end{align*}
	Dividing by $N$ and letting $N \to \infty$, we conclude that
	\begin{align}\label{eq:59:3}
		\underline{d}\bra{\set{n \in \NN_0}{f(n) \neq c}} \geq \frac{1}{6} & > 0. \qedhere
	\end{align}
\end{proof}

\section{Example in bases 2 and 3}\label{sec:Example}
In this section, we construct a sequence which is asymptotically $2$- and $3$-automatic, but not asymptotically equal to a periodic sequence. 

To begin with, let
\begin{equation}\label{eq:70:H_i}
(H_i)_{i=0}^\infty = \bra{2^{\a_i} 3^{\b_i} }_{i=0}^\infty 
\end{equation}
 be the increasing enumeration of the set 
\begin{equation}\label{eq:70:cH}
\cH = \set{2^\a 3^\b}{\a,\b \in \NN_0} = \{ 1,2,3,4,6,8,9,12,\dots\},
\end{equation}
 meaning that $H_i < H_{i+1}$ for all $i \in \NN_0$ and $\cH = \set{H_i}{i \in \NN_0}$. 

\begin{lemma}\label{lem:ex:1}
For the sequence $(H_i)_{i=0}^\infty$ defined above, we have
\begin{equation}\label{eq:70:1}
	\lim_{i \to \infty} \frac{H_{i+1}}{H_{i}} = 1.
\end{equation}
\end{lemma}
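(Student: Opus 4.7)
The plan is to exploit the irrationality of $\log_2 3$ through a pigeonhole/Kronecker argument to show that, for every $\e>0$, the multiplicative interval $(H,(1+\e)H)$ contains an element of $\cH$ for every sufficiently large $H\in\cH$. From this the claim is immediate.

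More concretely, fix $\e>0$ and set $\eta = \log_2(1+\e)>0$. Since $\log_2 3$ is irrational, the fractional parts $\{q \log_2 3\}_{q\in\NN}$ are dense in $[0,1)$ by Kronecker's theorem, so I can choose positive integers $p,q$ with
\[
0 < p - q\log_2 3 < \eta,
\]
and, symmetrically, positive integers $p',q'$ with
\[
0 < q'\log_2 3 - p' < \eta.
\]
Exponentiating these two inequalities gives
\[
1 < \frac{2^p}{3^q} < 1+\e, \qquad 1 < \frac{3^{q'}}{2^{p'}} < 1+\e.
\]

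Now take any $H=2^{\a}3^{\b}\in\cH$ with $\b\geq q$; then $H^\ast := 2^{\a+p}3^{\b-q}\in\cH$ satisfies $H < H^\ast < (1+\e)H$. Symmetrically, if $\a\geq p'$, then $H^\ast := 2^{\a-p'}3^{\b+q'}\in\cH$ again satisfies $H < H^\ast < (1+\e)H$. The only elements of $\cH$ failing both conditions have $\a < p'$ and $\b < q$, hence $H < 2^{p'}3^q$; in particular there are only finitely many such $H$. Consequently, for all $i$ with $H_i\geq 2^{p'}3^q$, there exists $H^\ast\in\cH$ with $H_i < H^\ast < (1+\e)H_i$, which forces $H_{i+1}\leq H^\ast < (1+\e)H_i$, i.e.\ $1 < H_{i+1}/H_i < 1+\e$. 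Letting $\e\to 0$ yields \eqref{eq:70:1}.

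The only nontrivial ingredient is the Kronecker density step producing $p,q$ (and $p',q'$) with the required one-sided approximation; this is a standard consequence of the irrationality of $\log_2 3$ and constitutes the main, though routine, obstacle. Everything else is bookkeeping: verifying that the two ``shift'' operations $(\a,\b)\mapsto(\a+p,\b-q)$ and $(\a,\b)\mapsto(\a-p',\b+q')$ stay within $\NN_0^2$ outside a bounded set and genuinely produce an element of $\cH$ strictly between $H$ and $(1+\e)H$.
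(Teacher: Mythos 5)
Your proof is correct and takes essentially the same route as the paper: both invoke Kronecker's theorem (equivalently, irrationality of $\log 3/\log 2$) to find exponent shifts $(\alpha,\beta)\mapsto(\alpha+p,\beta-q)$ and $(\alpha,\beta)\mapsto(\alpha-p',\beta+q')$ that multiply an element of $\cH$ by a factor in $(1,1+\varepsilon)$, and then observe that outside a finite exceptional set at least one shift is applicable, producing an element of $\cH$ strictly between $H_i$ and $(1+\varepsilon)H_i$. The only cosmetic difference is that the paper works with natural logarithms and phrases the applicability condition as $\alpha_i\geq\gamma'$ or $\beta_i\geq\delta$ for all large $i$ rather than as finiteness of the complementary set; the substance is identical.
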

\begin{proof}
\renewcommand{\e}{t}
Since $\log 3/\log 2$ is irrational, it follows from Kronecker's equidistribution theorem that for each $\e > 0$ there exist $\gamma,\delta,\gamma',\delta' \in \NN_0$ such that
\[
	\gamma \log 2 - \delta \log 3,\ -\gamma' \log 2 + \delta' \log 3 \in (0,\log(1+\e)).
\]
Hence, for each $i \in \NN_0$ such that
\begin{equation}\label{eq:70:2a}
	\a_i \geq \gamma' \text{ or } \b_i \geq \delta
\end{equation}
we have
\[
	H_{i+1} \leq \max \bra{ 
	2^{\a_i + \gamma} 3^{\b_i - \delta}
	,\ 
	2^{\a_{i}-\gamma'} 3^{\beta_i + \delta'} } < (1+\e) H_i.
\]	
For fixed $\e$, \eqref{eq:70:2a} holds for all sufficiently large $i$, and hence 
\begin{equation*}
	1 \leq \liminf_{i \to \infty} \frac{H_{i+1}}{H_{i}} \leq \limsup_{i \to \infty} \frac{H_{i+1}}{H_{i}} \leq 1+\e.
\end{equation*}
Letting $\e \to 0$ we obtain \eqref{eq:70:1}.
\end{proof}

We define the sequence $f \colon \NN_0 \to \{+1,-1\}$ by $f(0) = +1$ and
\begin{equation}\label{eq:70:def-f}
	f(n) = (-1)^{\a_i+\b_i} \text{ for } n \in [H_i,H_{i+1}) \text{ and } i \in \NN_0.
\end{equation}

\begin{lemma}
	For the sequence $f$ given by \eqref{eq:70:def-f}, for almost all $ n \in \NN_0$ we have
	\begin{align}
	\label{eq:70:f-1} f(n+1) &= f(n),  \\
	\label{eq:70:f-2} f(2n) &= -f(n), \\
	\label{eq:70:f-3} f(3n) &= -f(n).
	\end{align}
\end{lemma}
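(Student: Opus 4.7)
The plan is to show each of the three identities fails on a set of asymptotic density zero. In each case, I will cover the failure set by a small family of short subintervals and use Lemma \ref{lem:ex:1} to argue their total length is $o(N)$.

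\emph{For \eqref{eq:70:f-1}.} The equality $f(n+1) = f(n)$ can fail only when $n$ and $n+1$ lie in different blocks, i.e.\ when $n+1 \in \cH$. Since $\cH$ is the set of $3$-smooth integers, $\abs{\cH \cap [0, N)} = O((\log N)^2)$, so the bad set $\cH - 1$ has density $0$.

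\emph{For \eqref{eq:70:f-2}.} Fix $n \in [H_i, H_{i+1})$. Since $2H_i \in \cH$, write $2H_i = H_{m_0}$, so $\a_{m_0} = \a_i + 1$ and $\b_{m_0} = \b_i$. If $(2H_i, 2H_{i+1})$ contains no element of $\cH$, then $2n \in [H_{m_0}, H_{m_0+1})$, giving $f(2n) = -f(n)$ automatically. The key combinatorial observation is that when an intrusion does exist, the \emph{smallest} element $H' \in \cH$ with $2H_i < H' < 2H_{i+1}$ must be odd: if $H' = 2h$ with $h \in \cH$, then $H_i < h < H_{i+1}$ would contradict the consecutivity of $H_i$ and $H_{i+1}$ in $\cH$. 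Hence $H' = 3^\beta$ for some $\beta \geq 1$, and the bad $n$ in this block satisfy $n \in [\lceil 3^\beta/2 \rceil, H_{i+1})$. Re-indexing by $\beta$, let $i_\beta$ be the unique index with $H_{i_\beta} < 3^\beta/2 < H_{i_\beta+1}$; then the full bad set is contained in $\bigcup_{\beta \geq 1} [\lceil 3^\beta/2 \rceil, H_{i_\beta + 1})$.

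To finish, note that $i_\beta \to \infty$ as $\beta \to \infty$, so Lemma \ref{lem:ex:1} gives $H_{i_\beta+1}/H_{i_\beta} \to 1$. Since $H_{i_\beta} \leq 3^\beta/2 < H_{i_\beta+1}$, this forces $H_{i_\beta+1}/(3^\beta/2) \to 1$, hence each bad block has size $o(3^\beta)$. Given $\eta > 0$, choose $\beta_0$ so that $H_{i_\beta+1} - 3^\beta/2 \leq \eta \cdot 3^\beta$ for $\beta \geq \beta_0$; then the total bad count in $[0, N)$ is at most $O(1) + \eta \sum_{\beta_0 \leq \beta,\ 3^\beta \leq 2N} 3^\beta \leq O(1) + 3\eta N$. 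Letting $\eta \to 0$ yields density $0$. The identity \eqref{eq:70:f-3} is entirely symmetric, with the roles of $2$ and $3$ swapped: the smallest $\cH$-element in $(3H_i, 3H_{i+1})$ must be coprime to $3$ (else $H'/3$ would contradict consecutivity), hence a power of $2$, say $2^\a$, and the bad set is contained in $\bigcup_{\a \geq 1} [\lceil 2^\a/3 \rceil, H_{i_\a+1})$, where the same density computation applies.

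The main obstacle is precisely the structural observation that the ``first intrusion'' in the dilated interval is coprime to the dilation factor. Without it, one only obtains the trivial bound $H_{i+1} - H_i \leq H_i \delta_i$ per block, which telescopes to $H_{I+1}$ and fails to yield density zero.
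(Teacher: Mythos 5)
Your proof is correct and follows essentially the same route as the paper's: both hinge on the same key observation, that the smallest $\cH$-element strictly between $2H_i$ and $2H_{i+1}$ (resp.\ $3H_i$ and $3H_{i+1}$) must be coprime to the dilation factor, hence a power of $3$ (resp.\ of $2$), and both then use Lemma \ref{lem:ex:1} to show the bad intervals have total length $\sum_\beta o(3^\beta)$, giving density zero. The only real difference is that the paper additionally proves there is exactly one such intrusion per dilated block (the identity $j'=j+2$ in its notation), expressing the bad length as $\tfrac12(H_{j+2}-H_{j+1})$, whereas you simply bound the bad set by the full tail $[\lceil 3^\beta/2\rceil, H_{i_\beta+1})$ and squeeze $3^\beta/2$ between consecutive $H$'s; this crude bound suffices and also makes your treatment of \eqref{eq:70:f-3} genuinely symmetric, avoiding the paper's extra remark that the estimate $H_{i+1}/H_i\le 2$ must be sharpened to $H_{i+1}/H_i\le 3/2$ for the base-$3$ case.
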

\begin{proof}
\renewcommand{\e}{t}
	We begin with \eqref{eq:70:f-1}. If $f(n+1) \neq f(n)$ then, as a direct consequence of \eqref{eq:70:def-f}, we have $n +1 = H_{i}$ for some $i \in \NN_0$. For each $N \in \NN$ and $i \in \NN_0$ with $H_i \leq N$ we have $\a_i \leq (\log N)/(\log 2)$ and $\b_i \leq (\log N)/(\log 3)$, and consequently
	\begin{equation}\label{eq:70:1a}
		\abs{ \set{ n \in [0,N) }{f(n+1) \neq f(n)}  } \leq 
		\abs{ \set{ i \in \NN_0 }{H_i \leq N} } \leq  \frac{(\log N)^2}{ (\log 2)(\log 3)}.
	\end{equation}
	To finish the proof of \eqref{eq:70:f-1}, it remains to divide \eqref{eq:70:1a} by $N$ and let $N \to \infty$.
	
	Next, we prove \eqref{eq:70:f-2}. Consider an integer $n \in \NN$ with $f(2n) \neq - f(n)$. Pick $i \in \NN_0$ such that $n \in [H_i,H_{i+1})$. Since $2 \cH \subset \cH$, we have $2H_i = H_{j}$ for some $j \in \NN_0$, and likewise $2H_{i+1} = H_{j'}$ for some $j' \in \NN_0$ with $j' \geq j+1$. If $n \in [H_i,H_{j+1}/2)$ then $2n \in [H_{j},H_{j+1})$ and consequently 
	\begin{equation}\label{eq:70:2}
		f(2n) = (-1)^{\a_j + \b_j} = (-1)^{\a_{i}+1+\b_i} = -f(n),
	\end{equation}
	contrary to the choice of $n$. Thus, $n \in [H_{j+1}/2,H_{i+1})$, which in particular implies that $j' \geq j+2$ (otherwise we would have $j' = j+1$ and $2n \in [H_{j+1},H_{j+1}) = \emptyset$, which is absurd). Since $H_{i} < H_{j+1}/2 < H_{i+1}$, we see that $H_{j+1}/2$ does not belong to $\cH$, meaning that $H_{j+1}$ is an odd integer, and hence $H_{j+1}$ is a power of $3$. Note that
	\begin{equation}\label{eq:70:3}
		 H_{j+2}/H_{j+1} \leq H_{i+1}/H_i \leq 2 < 3,
	\end{equation}
	and thus $H_{j+2}$ cannot also be a power of $3$. Thus, $H_{j+2}/2 \in \cH \cap (H_i,H_{i+1}]$ and consequently $j' = j+2$.

	It follows that $H_{j+1}$ is a power of $3$, or equivalently $\a_{j+1} = 0$. Indeed, if we had $\a_{j+1} > 0$ then $H_{j+1}/2$ would be an element of $\cH$ with $H_{i} < H_{j+1}/2 < H_{i+1}$, which is absurd. A similar reasoning shows that $j' = j+2$, since $j' > j+2$ would imply that $H_{j+1}$ and $H_{j+2}$ are both powers of $3$ and consequently 
	\begin{equation}\label{eq:70:3a}
		 2 \geq H_{i+1}/H_i > H_{j+2}/H_{j+1} \geq 3.
	\end{equation}
	Thus, in particular, we have shown that
	\begin{equation}\label{eq:70:4}
		 2n \in [H_{j+1},H_{j+2}).
	\end{equation}
	\newcommand{\kk}{l}Let $K \subset \NN_0$ be the set of all integers $k \in \NN_0$ such that $H_{k}$ is a power of $3$. It follows from \eqref{eq:70:4} that for each $N \in \NN$ we have the estimate
	\begin{equation}\label{eq:70:5}
		 \abs{ \set{ n \in [0,N) }{f(2n) \neq f(n)}  } \leq 
		 \frac{1}{2} \sum_{\substack{ k \in K\\ H_k < 2N}} \abs{H_{k+1}-H_{k}}.
	\end{equation}
	Fix any $\e > 0$. By Lemma \ref{lem:ex:1}, there exists $\kk = \kk(\e)$ such that $H_{k+1}< (1+\e)H_k$ for all $k \geq \kk$. For $k < \kk$, we have the trivial estimate $H_{k+1} \leq 2 H_k$. Splitting the sum in \eqref{eq:70:5} according to whether $k \geq \kk$, for all $N > H_{\kk}/2$ we obtain
	\begin{align*}
		 \sum_{\substack{ k \in K\\ H_k < 2N }} \abs{H_{k+1}-H_{k}} 
		 & = \sum_{\substack{ k \in K\\ k < \kk }} \abs{H_{k+1}-H_{k}} +
		 \sum_{\substack{ k \in K\\ H_{\kk} \leq H_k < 2N}} \abs{H_{k+1}-H_{k}} \\
		 & \leq \sum_{\substack{ k \in K\\ k < \kk }} H_k +
		\e \sum_{\substack{k \in K \\ H_k < 2N}}  H_k \\
		 & = \sum_{\substack{ \beta < \beta_{\kk} }} 3^{\beta} +
		 \e \sum_{\substack{ \beta \leq \log_3 2N }} 3^{\beta} 
		\leq {3^{\beta_{\kk}}}/{2}  + 3\e N 
	\end{align*}	
	Inserting this estimate into \eqref{eq:70:5} and letting $N \to \infty$, we conclude that 
	\begin{equation}\label{eq:70:51}
		 \limsup_{N\to \infty} \frac{1}{N} \abs{ \set{ n \in [0,N) }{f(2n) \neq f(n)}  } \leq 
		 \frac{3}{2} \e.
	\end{equation}
	Since $\e > 0$ was arbitrary, \eqref{eq:70:f-2} follows.
		
	The proof of \eqref{eq:70:f-3} is fully analogous to the proof of \eqref{eq:70:f-2} so we do not go into its details. One minor difference is that we need to replace the estimate $2 \geq H_{i+1}/H_i$ in \eqref{eq:70:3a} with a slightly stronger estimate such as $3/2 \geq H_{i+1}/H_i$, which is easily seen to hold for all $i \geq 1$ (cf.\ Lemma \ref{lem:ex:1}).
\end{proof}

\begin{proposition}\label{prop:ex:f-is-asym-aut}
	The sequence $f$ defined by \eqref{eq:70:def-f} is asymptotically $2$-automatic and asymptotically $3$-automatic.
\end{proposition}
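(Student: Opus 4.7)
The plan is to establish, for every $\alpha \in \NN_0$ and every $0 \leq r < 2^\alpha$, the asymptotic identity
\begin{equation}\label{eq:plan-2}
f(2^\alpha n + r) \simeq (-1)^\alpha f(n),
\end{equation}
and, symmetrically, $f(3^\beta n + r) \simeq (-1)^\beta f(n)$ for all $\beta \in \NN_0$ and $0 \leq r < 3^\beta$. Once these are in hand, every element of $\cN_2(f)$ is asymptotically equal to either $f$ or $-f$, so $\abs{\cN_2(f)/{\simeq}} \leq 2$, and analogously $\abs{\cN_3(f)/{\simeq}} \leq 2$; the proposition then follows immediately from the definition of asymptotic automaticity.

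The one auxiliary observation I would record first is the following trivial substitution principle: for any $a \in \NN$ and $b \in \NN_0$, if a set $A \subset \NN_0$ has $\bar d(A) = 0$, then $\set{n \in \NN_0}{an + b \in A}$ has upper density $0$ as well, since pulling back by an affine map can only multiply the density by $1/a$. Consequently, the previously established relations \eqref{eq:70:f-1}--\eqref{eq:70:f-3} remain valid as asymptotic identities after substituting $n$ by any expression of the form $an + b$, and they may be chained finitely many times without losing the ``almost all'' quality. In particular, iterating \eqref{eq:70:f-1} yields $f(n + r) \simeq f(n)$ for every fixed $r \in \NN_0$.

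With this substitution principle available, I prove \eqref{eq:plan-2} by induction on $\alpha$. The case $\alpha = 0$ is trivial. For the induction step, given $0 \leq r < 2^{\alpha+1}$, write $r = 2s + t$ with $0 \leq s < 2^\alpha$ and $t \in \{0,1\}$, so that $2^{\alpha+1} n + r = 2(2^\alpha n + s) + t$; then
\[
f(2^{\alpha+1}n + r) \simeq f\bigl(2(2^\alpha n + s)\bigr) \simeq -f(2^\alpha n + s) \simeq -(-1)^\alpha f(n) = (-1)^{\alpha+1} f(n),
\]
where the first $\simeq$ is \eqref{eq:70:f-1} applied (once, if $t = 1$) at the point $2(2^\alpha n + s)$, the second is \eqref{eq:70:f-2} applied at $2^\alpha n + s$, and the third is the inductive hypothesis. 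The entirely parallel argument in base $3$, using \eqref{eq:70:f-3} in place of \eqref{eq:70:f-2} and writing $r = 3s + t$ with $t \in \{0,1,2\}$, handles the second assertion.

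No serious obstacle is anticipated: the whole argument is an induction driven by the three relations of the preceding lemma, and the only point requiring consistent attention is the bookkeeping of almost-all statements under the affine substitutions $n \mapsto an + b$, which is dealt with uniformly by the substitution principle recorded above.
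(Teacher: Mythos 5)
Your proof is correct and takes essentially the same route as the paper: both deduce $f' \simeq \pm f$ for every $f' \in \cN_2(f)$ by iterating the relations \eqref{eq:70:f-1} and \eqref{eq:70:f-2} (the paper first strips off the remainder $r$ and then the power of $2$; you peel off one digit at a time by induction on $\alpha$, which amounts to the same chaining). One small slip in your substitution principle: pulling back a null set $A$ along $n \mapsto an+b$ multiplies the upper density by (at most) $a$, not $1/a$ — since $\abs{\{n<N : an+b \in A\}} \le \abs{A \cap [0,aN+b)}$ — but the conclusion $\bar d = 0 \Rightarrow \bar d = 0$ still holds, so the argument stands.
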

\begin{proof}
Pick any $f' \in \cN_2(f)$. Then there exist $\a,r \in \NN_0$ such that
\begin{equation}\label{eq:70:5:1}
	f'(n) = f(2^\a n + r) \text{ for all } n \in \NN_0.
\end{equation}
Applying \eqref{eq:70:f-1} iteratively $r$ times, we conclude that
\begin{equation}\label{eq:70:5:2}
	f'(n) = f(2^\a n) \text{ for almost all } n \in \NN_0.
\end{equation}
Next, applying \eqref{eq:70:f-2} iteratively $\a$ times, we conclude that
\begin{equation}\label{eq:70:5:3}
	f'(n) = (-1)^\a f(n) \text{ for almost all } n \in \NN_0.
\end{equation}
Thus, $f' \simeq f$ or $f' \simeq -f$, depending on the parity of $\a$. It follows that $\cN_2(f)/{\simeq}$ has (at most) two elements. In particular $\cN_2(f)/{\simeq}$ is finite and $f$ is  asymptotically $2$-automatic. The proof that $f$ is asymptotically $3$-automatic is fully analogous.
\end{proof}

\begin{proposition}\label{prop:ex:f-not-periodic}
	The sequence $f$ defined by \eqref{eq:70:def-f} is not asymptotically equal to a periodic sequence
\end{proposition}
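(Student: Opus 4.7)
The plan is to use the approximate functional equation $f(2n) \simeq -f(n)$ from \eqref{eq:70:f-2} to rule out asymptotic periodicity. Suppose, for contradiction, that $f \simeq g$ for some $m$-periodic sequence $g \colon \NN_0 \to \Omega$, where $m \in \NN$. I first argue that $g$ must take values in $\{+1,-1\}$ on all of $\NN_0$: if $g(r) \notin \{+1,-1\}$ for some $r < m$, then by periodicity $g(n) \neq f(n)$ for every $n \equiv r \pmod m$, a set of density $1/m > 0$, contradicting $f \simeq g$.

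Next, I transfer the functional equation from $f$ to $g$. Since $A := \set{n \in \NN_0}{f(n) \neq g(n)}$ satisfies $\bar d(A) = 0$, and $\abs{\set{n < N}{2n \in A}} \leq \abs{A \cap [0,2N)}$, the set $\set{n \in \NN_0}{f(2n) \neq g(2n)}$ also has zero upper density. Combining this with \eqref{eq:70:f-2} yields $g(2n) \simeq -g(n)$.

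Finally, I evaluate on the arithmetic progression $m\NN_0$, which has density $1/m > 0$. For every $n = km \in m\NN_0$, periodicity gives $g(n) = g(0)$ and $g(2n) = g(2km \bmod m) = g(0)$, so the identity $g(2n) = g(n)$ holds for \emph{every} $n \in m\NN_0$. On the other hand, $g(2n) = -g(n)$ holds for almost all $n \in \NN_0$, and hence for at least one (in fact, almost all) $n \in m\NN_0$. Combining these two identities at such an $n$ gives $g(0) = -g(0)$, which is impossible since $g(0) \in \{+1,-1\}$. This is the desired contradiction. There is no real obstacle beyond this short density computation: the key conceptual input is that any condition failing on a single residue class modulo $m$ fails on a set of positive density, so the almost-sure functional equation $g(2n) \simeq -g(n)$ must in fact hold on the distinguished class $0 \bmod m$, where it collapses to an impossible equation.
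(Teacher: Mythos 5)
Your proof is correct, and it takes a genuinely different route from the paper's. The paper's argument first uses the almost-sure shift relation $f(n+1) \simeq f(n)$ (equation \eqref{eq:70:f-1}) together with periodicity of $\tilde f$ to force $\tilde f$ to be constant, and only then invokes the doubling relation $f(2n) \simeq -f(n)$ to derive the contradiction $c = -c$. You bypass the shift relation entirely: you transfer the doubling relation to $g$, observe that $n \mapsto 2n$ maps the residue class $0 \bmod m$ into itself, so $g(2n) = g(n) = g(0)$ holds identically on $m\NN_0$, and then use the positive density of $m\NN_0$ to force the contradiction $g(0) = -g(0)$ at a single point. Your version uses fewer of the derived functional equations (only \eqref{eq:70:f-2}) and works directly with the period rather than reducing to the constant case; it is a slightly more economical argument, while the paper's reduction-to-constant step is perhaps more in keeping with the spirit of Theorem~\ref{thm:main-asymmetric}, where asymptotic shift-invariance is the central mechanism. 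Both are clean, and the supporting density computations (that $\{n : 2n \in A\}$ has zero upper density whenever $A$ does, and that a periodic sequence asymptotically equal to $f$ must take values in $\{+1,-1\}$) are handled correctly.
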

\begin{proof}
Suppose, for the sake of contradiction, that there exists a periodic sequence $\tilde f \colon \NN_0 \to \{+1,-1\}$  
 \begin{equation}\label{eq:70:6:1}
	f(n) = \tilde f(n) \text{ for almost all } n \in \NN_0.
\end{equation}
It follows from \eqref{eq:70:6:1} and \eqref{eq:70:f-1} that 
\begin{equation}\label{eq:70:6:2}
	\tilde f(n) = \tilde f(n+1) \text{ for almost all } n \in \NN_0.
\end{equation}
Since $\tilde f$ is periodic, it follows that \eqref{eq:70:6:2} holds for all $n \in \NN_0$, and hence $\tilde f$ is constant. Pick $c \in \{+1,-1\}$ such that $\tilde f(n) = c$ for all $n \in \NN_0$. Then
 \begin{equation}\label{eq:70:6:1B}
	f(n) = c \text{ for almost all } n \in \NN_0.
\end{equation}
Combining \eqref{eq:70:6:1B} and \eqref{eq:70:f-2}, we conclude that
 \begin{equation}\label{eq:70:6:3}
	f(n) = -f(2n) = -c \text{ for almost all } n \in \NN_0.
\end{equation}
Since $c \neq -c$, \eqref{eq:70:6:3} and \eqref{eq:70:6:1B} yield a contradiction, which finishes the argument.
\end{proof}

\section{Non-examples in all bases}\label{sec:NonExple}

We now discuss the example showing that asymptotic shift-invariance does not imply asymptotic automaticity.

\begin{proposition}\label{prop:non-ex}
	Let $k \geq 2$ and let $f \colon \NN_0 \to \{0,1\}$ be given by
	\[
		f(n) = \ip{\sqrt{n}} \bmod 2.
	\]
	Then $f$ is not asymptotically $k$-automatic.
\end{proposition}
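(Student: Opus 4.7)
The plan is to argue by contradiction. Suppose that $f$ is asymptotically $k$-automatic, so that $\cN_k(f)/{\simeq}$ is finite. For each $\alpha \in \NN_0$, the sequence
\[
	h_\alpha(n) := f(k^{2\alpha} n) = \lfloor k^\alpha \sqrt{n}\rfloor \bmod 2
\]
is an element of $\cN_k(f)$ (taking the residue $r=0$). I will show that the classes $[h_\alpha] \in \cN_k(f)/{\simeq}$ for $\alpha \in \NN_0$ are pairwise distinct, producing infinitely many equivalence classes and the desired contradiction.

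Fix $0 \leq \alpha < \beta$, set $c := k^{\beta-\alpha} \geq 2$, and let $u(n) := k^\alpha \sqrt{n}$ and $\theta(n) := \{u(n)/2\} \in [0,1)$. Since $c \lfloor u/2 \rfloor \in \ZZ$, one has $\{cu/2\} = \{c \theta\}$. Unpacking the definitions of $\lfloor \cdot\rfloor \bmod 2$,
\[
	h_\alpha(n) = \mathbf{1}[\theta(n) \geq 1/2], \qquad h_\beta(n) = \mathbf{1}[\{c \theta(n)\} \geq 1/2],
\]
so that both $h_\alpha(n)$ and $h_\beta(n)$ depend on $n$ only through $\theta(n)$.

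The key ingredient is the classical theorem (due to Fej\'er, provable via Weyl's criterion together with a standard van der Corput estimate) that the sequence $(\{r\sqrt{n}\})_{n \in \NN_0}$ is equidistributed modulo $1$ for every real $r \neq 0$. Applied with $r = k^\alpha/2$, this yields that $\theta(n)$ is equidistributed on $[0,1)$. Consequently, the asymptotic density of $\{n \in \NN_0 : h_\alpha(n) = h_\beta(n)\}$ equals the Lebesgue measure of
\[
	E_c := \{\theta \in [0,1) : \mathbf{1}[\theta \geq 1/2] = \mathbf{1}[\{c \theta\} \geq 1/2]\}.
\]
A direct piecewise computation on the partition $[0,1) = \bigcup_{j=0}^{c-1}[j/c, (j+1)/c)$ shows that $|E_c| = 1/2$ when $c$ is even and $|E_c| = (c+1)/(2c)$ when $c$ is odd; in either case $|E_c| < 1$. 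Therefore $\{n : h_\alpha(n) \neq h_\beta(n)\}$ has positive density, so $h_\alpha \not\simeq h_\beta$, and the proof is complete.

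The substantive input is Fej\'er's equidistribution theorem for $\{r\sqrt{n}\}$; everything else is routine bookkeeping. I foresee no serious obstacle---the only care needed is in translating equidistribution of $\theta(n)$ into the density statement for $\{n : h_\alpha(n) = h_\beta(n)\}$, which is immediate because $E_c$ is a finite union of intervals (a Jordan-measurable continuity set).
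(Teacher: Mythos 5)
Your proof is correct, but it takes a genuinely different route from the paper's. Both proofs follow the same top-level strategy of exhibiting infinitely many inequivalent elements of $\cN_k(f)/{\simeq}$ by comparing $f(k^\alpha n)$ for varying $\alpha$; the difference lies entirely in how non-equivalence is established. The paper argues elementarily and by hand: it reduces (for notational ease) to comparing $f(n)$ with $f(\lfloor n/k\rfloor)$, partitions $\NN_0$ into blocks $[k i^2, k(i+1)^2)$, and directly counts the perfect squares in each block to get an explicit lower bound of $1/\sqrt{k}$ on the density of disagreement. Your argument instead rewrites $h_\alpha(n)$ and $h_\beta(n)$ as indicator functions of the fractional part $\theta(n) = \{k^\alpha\sqrt{n}/2\}$, invokes Fej\'er's equidistribution theorem for $\{r\sqrt{n}\}$, and reduces the density of agreement to an explicit Lebesgue measure $|E_c| \leq 3/4 < 1$. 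What your approach buys is uniformity: the parameter $c = k^{\beta-\alpha}$ handles all pairs $\alpha < \beta$ at once with no side conditions, whereas the paper restricts to a representative special case ($k \geq 5$ not a square, $\alpha=1$, $\beta=0$) and waves at the rest. Your approach also generalises more transparently to arbitrary slowly-varying $\varphi$ in place of $\sqrt{\cdot}$, which the paper only remarks on informally. What the paper's approach buys is self-containment: it uses no external input beyond counting squares in an interval, while yours leans on Fej\'er/van der Corput, a nontrivial ingredient that the paper does not otherwise need. Both are valid; yours is cleaner structurally, the paper's is more elementary.
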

\begin{proof}
	It will suffice to show that for all $\a,\b \in \NN_0$ with $\a \neq \b$, for sequences
	\begin{align*}
		f'(n) &:= f(k^\a n), &&& f''(n) &:= f(k^\b n), &&& (n \in \NN_0) 
	\end{align*}
	we have $f' \not \simeq f''$. Indeed, once this is shown, it will immediately follow that $\cN_k(f)/{\simeq}$ is infinite. For ease of notation, we restrict to the case where $\a = 1$, $\b = 0$ and $k \geq 5$ is not a square; the general case requires no new ideas.
	
	Suppose, for the sake of contradiction, that $f' \simeq f'' = f$. It follows that $f(n) = f(\ip{n/k})$ for almost all $n \in \NN$. For $i \in \NN_0$, the squares in the interval $[ki^2,k(i+1)^2)$ are
	\[  \ceil{\sqrt{k}i}^2, \bra{  \ceil{\sqrt{k}i}  +1} ^2, \dots,  \floor{\sqrt{k}(i+1)}^2; \]
	note that under our assumptions, this includes at least three squares. For all $n \in [ki^2,k(i+1)^2)$ we have $f\bra{\ip{n/k}} = i \bmod 2$. On the other hand, for each of $c \in \{0,1\}$ we have 
	\[	
		f(n) = c 
		\begin{cases}
		\text{ for all } n \in \left[ \ceil{\sqrt{k}i}^2,\bra{ \ceil{\sqrt{k}i}+1 }^2\right) 
		&\text{ if } \ceil{\sqrt{k}i} \equiv c \bmod{2},\\
		\text{ for all } 
		n \in \left[\bra{\ceil{\sqrt{k}i}+1}^2,\bra{\ceil{\sqrt{k}i}+2}^2 \right)
		&\text{ if } \ceil{\sqrt{k}i} \equiv 1-c \bmod{2}.
		\end{cases}\]
	In particular, we can bound the number of places where $f(n) \neq f(\ip{n/k})$ by
	\[
		\abs{ \set{n \in [ki^2,k(i+1)^2)}{f(n) = c} } \geq 2\ceil{\sqrt{k}i}+1. 
	\]	
	As a consequence, we have
	\[
		\frac{ \abs{\set{n \in [ki^2,k(i+1)^2)}{f(n) \neq f(\ip{n/k)}}} }{\abs{[ki^2,k(i+1)^2)}}
		\geq \frac{2 \ceil{\sqrt{k}i}+1}{k(2i+1)} = \frac{1}{\sqrt{k}} \cdot \bra{1+ O({1}/{i})}.
	\]
	Separating an arbitrary interval $[0,N)$ into intervals of the form $[ki^2,k(i+1)^2)$ and a negligible remainder set, we conclude that
	\[
		\bar{d}\bra{\set{n \in \NN_0}{f(n) \neq f(\ip{n/k})}} \geq \frac{1}{\sqrt{k}} > 0.
	\]
	This contradicts the assumption that $f' \simeq f$ and finishes the argument.
\end{proof}

Since for the sequence $f$ from Proposition \ref{prop:non-ex} we have $f(n) = f(n+1)$ unless $n+1$ is a square, we have finished verifying the claims made in Example \ref{ex:non}. We also point out that there is nothing special about the map $x \mapsto \sqrt{x}$ which appears in the definition of $f$: A very similar argument would work for any map of the form $f(n) = \ip{\varphi(n)} \bmod 2$ , where $\varphi \colon \RR \to \RR$ is sufficiently regular increasing function with sub-linear growth.

\section{Proofs of the main results}\label{Sec:Proofs}

\begin{proof}[Proof of Theorem \ref{thm:main-symmetric}]\color{checked}
Pick representatives 
\[ f_0,f_1,\dots,f_{d-1} \in \cN_k(f) \text{ and } g_0,g_1,\dots,g_{e-1} \in \cN_l(f)\]
of the equivalence classes in $\cN_k(f)/{\simeq}$ and $\cN_l(f)/{\simeq}$ respectively. Put 
\[ \vec f = (f_0,f_1,\dots,f_{d-1}) \colon \NN_0 \to \Omega^d \text{ and } \vec g = (g_0,g_1,\dots,g_{e-1}) \colon \NN_0 \to \Omega^e.\]
Let $\phi \colon \Sigma_k^* \to \Sigma_d$ and $\psi \colon \Sigma_l^* \to \Sigma_e$ be the maps from Lemma \ref{lem:ass-aut=>rep} applied to $f$ in bases $k$ and $l$ respectively. Let $\alpha_0$ be an integer such that $\phi(u0^{2\a_0}v) = \phi(u 0^{\a_0} v)$ for all $u,v \in \Sigma_k^*$, whose existence can be deduced, for instance, repeating the argument in \cite[ Lemma 2.2]{ByszewskiKonieczny-2020-AA}. Note that, more generally, we have
\begin{align}\label{eq:798:a:1}
	\phi(u0^{\a}v) = \phi(u 0^{\a_0} v) \text{ for all } u,v \in \Sigma_k^* \text{ and } \a \in \a_0 \NN.
\end{align}

Let $\phi' \colon \NN_0 \to \Sigma_d$ be the map defined for $m \in \NN_0$ by $\phi'(m) = \phi\bra{0^\mu (m)_k}$, where $\mu$ is the unique integer (dependent on $m$) such that $\a_0 \leq \mu < 2\a_0$ and $\mu + \abs{(m)_k)} \equiv 0 \bmod \a_0$. This definition guarantees that for all $\a \in \a_0 \NN$  for all and $m \in \NN_0$ with $\a \geq 2\a_0$, and $m < k^{\a-2\a_0}$ we have
\begin{equation}\label{eq:798:f-phi}
	f(k^\a n + m) = f_{\phi( (m)_k^{\a} )}(n) 
= f_{\phi'(m)}(n) \text{ for almost all } n \in \NN_0.
\end{equation}	
Crucially, the expression on the right-hand side of \eqref{eq:798:f-phi} does not depend on $\a$.
Let $\beta_0 \in \NN$ and $\psi' \colon \NN_0 \to \Sigma_e$ be defined analogously to $\a_0$ and $\phi'$, using base $l$ rather than $k$.

For $\vec x \in \Omega^d$ and $\vec y \in \Omega^e$ consider the sets
\begin{align}
\label{eq:798:3-AB}	
A(\vec x) &:= \set{ n \in \NN_0 }{ \vec f(n) = \vec x}, &&&
B(\vec y) &:= \set{ n \in \NN_0 }{  \vec g(n) = \vec y}.
\end{align}
Let $X_0$ denote the set of all $\vec x \in \Omega^d$ with $\bar d(A(\vec x)) > 0$, and let $Y_0 \subset \Omega^e$ be defined analogously.
Note that we have
\begin{align}
	\label{eq:798:4} \bigcup_{\vec x \in X_0} A(\vec x) \simeq \bigcup_{\vec x \in \Omega^d} A(\vec x) &= \NN_0, &&&
\bigcup_{\vec y \in Y_0} B(\vec y) \simeq \bigcup_{\vec y \in \Omega^e} B(\vec y) &=  \NN_0.
\end{align}
More generally, for $\a,\b \in \NN_0$ let $X_{\b} \subset X_0$  and $Y_\a \subset Y_0$ denote the sets
\begin{align}\label{eq:79:def-Xb}
	X_\b &:= \set{\vec x \in \Omega^d}{\bar d\bra{A\bra{\vec x} \cap l^\beta \NN_0} > 0}, 
	&&& 
	Y_\a &:= \set{\vec y \in \Omega^e}{\bar d\bra{B\bra{\vec y} \cap k^\a \NN_0} > 0}.
\end{align}
Next, we put
\begin{align}\label{eq:79:def-X*}
	X_* &:= \bigcap_{\b \in \b_0 \NN_0} X_\b, &&& Y_* &:= \bigcap_{\a \in \a_0 \NN_0} Y_\a.
\end{align}

\begin{lemma}\label{lem:internal}
For each $\vec x \in X_*$, the sequence $m \mapsto x_{\phi'(m)}$ is eventually periodic.
\end{lemma}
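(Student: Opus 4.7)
The plan is to show that the sequence $h \colon \NN_0 \to \Omega$ defined by $h(m) := x_{\phi'(m)}$ is both $k$-automatic and $l$-automatic, and then to invoke Cobham's theorem (applicable since $k,l$ are multiplicatively independent) to conclude that $h$ is eventually periodic.

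The $k$-automaticity of $h$ should be straightforward: the map $\phi' \colon \NN_0 \to \Sigma_d$ is obtained by applying the $k$-automatic map $\phi \colon \Sigma_k^* \to \Sigma_d$ to the base-$k$ representation of $m$ prepended with a padding whose length is determined by $\abs{(m)_k} \bmod \a_0$, and is therefore itself a $k$-automatic sequence. Composing with the projection $i \mapsto x_i$ yields the $k$-automaticity of $h$.

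The crux of the argument is to establish the $l$-automaticity of $h$, which we would do by showing that $h(m)$ depends only on $\psi'(m)$; since $\psi'$ is $l$-automatic by the analogous argument to the one for $\phi'$, this factorisation will imply that $h$ is $l$-automatic as well. To prove the factorisation, we would fix $r_1, r_2 \in \NN_0$ with $\psi'(r_1) = \psi'(r_2) =: j$ and choose $\a \in \a_0 \NN$ and $\b \in \b_0 \NN$ large enough that $r_1, r_2 < \min\bra{k^{\a - 2\a_0}, l^{\b - 2\b_0}}$. The assumption $\vec x \in X_* \subseteq X_\b$ guarantees $\bar d\bra{A(\vec x) \cap l^\b \NN_0} > 0$, and hence that $S_\b := \set{n' \in \NN_0}{l^\b n' \in A(\vec x)}$ has $\bar d(S_\b) > 0$. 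For almost all $n \in A(\vec x) \cap l^\b \NN_0$, writing $n = l^\b n'$, two applications of \eqref{eq:798:f-phi} --- one in base $k$, using $\vec f(n) = \vec x$, and one in base $l$ --- yield, for each $r \in \{r_1,r_2\}$, the chain
\[
x_{\phi'(r)} = f_{\phi'(r)}(n) = f(k^\a n + r) = f(l^\b k^\a n' + r) = g_{\psi'(r)}(k^\a n').
\]
Consequently, $g_j(k^\a n')$ must equal both $x_{\phi'(r_1)}$ and $x_{\phi'(r_2)}$ for almost all $n' \in S_\b$, and since $\bar d(S_\b) > 0$, this forces $x_{\phi'(r_1)} = x_{\phi'(r_2)}$.

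Having shown $h$ to be both $k$-automatic and $l$-automatic, Cobham's theorem will close the argument. The main obstacle will be the density bookkeeping in the middle step: one must verify that all the exceptional sets coming from the two applications of \eqref{eq:798:f-phi} (as well as from the substitution $n \mapsto k^\a n'$, which sends the density-zero exceptional set in the base-$l$ identity to another density-zero set) really have density zero within $S_\b$, so that the identification of the two values truly follows from the agreement on a positive-density set. The verification that $\phi'$ and $\psi'$ are themselves $k$- and $l$-automatic sequences on $\NN_0$ is routine but requires a small amount of care about the length-dependent padding conventions.
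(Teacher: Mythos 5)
Your proof is correct and takes essentially the same route as the paper: both apply \eqref{eq:798:f-phi} in base $k$ and in base $l$ over a positive-density set guaranteed by $\vec x \in X_*$, deduce that $m \mapsto x_{\phi'(m)}$ is simultaneously $k$- and $l$-automatic, and then invoke the classical Cobham theorem. The one cosmetic difference is that the paper introduces an auxiliary $\vec y \in Y_\alpha$ and the positive-density intersection $\bra{A(\vec x)/l^\beta} \cap \bra{B(\vec y)/k^\alpha}$ to extract the clean identity $x_{\phi'(m)} = y_{\psi'(m)}$, whereas you avoid $\vec y$ altogether by pivoting directly on the common quantity $g_j(k^\alpha n')$ for $r_1,r_2$ with $\psi'(r_1)=\psi'(r_2)=j$ --- a mild streamlining that lands on the same factorisation through $\psi'$.
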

\begin{proof}
Let $\alpha \in \alpha_0 \NN$ and $\beta \in \beta_0 \NN$. Directly from \eqref{eq:79:def-X*}, we have
\begin{align}\label{eq:79:5:1}
	\bar{d}\bra{A(\vec x) \cap l^\b \NN_0} > 0.
\end{align}
In other words, recalling that $A(\vec x)/l^\b := \set{n \in \NN_0}{l^\b n \in A(\vec x)}$, we have
\begin{align}\label{eq:79:5:2}
	\bar{d}\bra{A(\vec x)/l^\b} > 0.
\end{align}
By the same argument as in \eqref{eq:798:4}, we obtain 
\begin{align}\label{eq:79:5:3}
	\NN_0 \simeq \bigcup_{\vec y \in Y_\a} B(\vec y)/k^\a.
\end{align}
Hence, there exists $\vec y \in Y_\a$ such that 
\begin{align}\label{eq:79:5:5}
	\bar{d}\braBig{\bra{A(\vec x)/l^\b} \cap \bra{B(\vec y)/k^\a}} > 0.
\end{align}

Consider any integer $m$ with 
\begin{equation}\label{eq:798:m}
0 \leq m < \min\bra{ k^{\a-2\a_0}, l^{\b-2\b_0} }.
\end{equation} 
Bearing in mind \eqref{eq:798:f-phi} and \eqref{eq:798:3-AB}, we have
\begin{equation}\label{eq:798:f-1}
	f( k^\a l^\b n + m) = f_{\phi((m)_k^\a)}(l^\b n) = x_{\phi'(m)} \text{ for almost all } n \in A(\vec x)/l^\b.
\end{equation}
Similarly, applying the same reasoning in base $l$, we have
\begin{equation}\label{eq:798:g-1}
	f(k^\a l^\b n + m) = g_{\psi((m)_l^\b)}(k^\a n) = y_{\psi'(m)}  \text{ for almost all } n \in B(\vec y)/k^\a.
\end{equation}
Combining \eqref{eq:798:f-1} and \eqref{eq:798:g-1}, we conclude that
\begin{equation}\label{eq:798:x=y}
	x_{\phi'(m)} = f(k^\a l^\b n + m) = y_{\psi'(m)}  \text{ for almost all } n \in \bra{A(\vec x)/l^\b} \cap \bra{B(\vec y)/k^\a}.
\end{equation}
It follows from \eqref{eq:79:5:5} there exists at least one $n$ for which \eqref{eq:798:x=y} holds, and thus
\begin{equation}\label{eq:798:x=y!}
x_{\phi'(m)} = y_{\psi'(m)}.
\end{equation}
Since $\a \in \a_0\NN$ and $\b \in \b_0 \NN$ are arbitrary and \eqref{eq:798:m} is the only restriction on $m$, we conclude that \eqref{eq:798:x=y!} holds for all $m \in \NN_0$. 

The expression on the left-hand side of \eqref{eq:798:x=y!} is a $k$-automatic sequence with respect to $m$, while the expression on the right-hand side is an $l$-automatic sequence. It now follows from the classical version of Cobham's theorem that the sequence in \eqref{eq:798:x=y!} is eventually periodic. 
\end{proof}

Let $q$ be the least common multiple of the periods of the sequences $m \mapsto x_{\phi'(m)}$, where $\vec x \in X_*$. (Since $X_* \subset \Omega^d$ is finite, the definition of $q$ is well-posed.) Likewise, let $m_0$ be the least point beyond which the aforementioned sequences are periodic. Thus, for each $\vec x \in X_*$ and $m \geq m_0$ we have 
\(
	x_{\phi'(m+q)} = x_{\phi'(m)}.
\)
If we additionally assume that $m < k^{\a-2\a_0}-q$ then, by \eqref{eq:798:f-phi},
\begin{equation}\label{eq:798:f-is-q-inv}
	f(k^\a n + m+q) = x_{\phi'(m+q)} = x_{\phi'(m)} = f(k^\a n+m) \text{ for almost all } n \in A_\a(\vec x).
\end{equation}

Let us consider the set
\[
	E := \set{n \in \NN_0 }{ f(n+q) \neq f(n)}.
\]
We aim is to show that $\bar{d}(E) = 0$. By \eqref{eq:798:f-is-q-inv}, we already know that
\begin{equation}\label{eq:79:7:1}
	\bar{d}\bra{ E \cap \bra{k^\a A(\vec x)  + [m_0,k^{\a-2\a_0}-q) }} = 0
\end{equation}
for each $\a \in \a_0 \NN$ and $\vec x \in X_*$. By Lemma \ref{lem:d_u-limit}, it will suffice to show that
\begin{equation}\label{eq:79:7:2}
	\underline{d}\bra{\bigcup_{\a_0 \mid \a < \gamma} \bigcup_{\vec x \in X_*} \bra{k^\a A(\vec x)  + [m_0,k^{\a-2\a_0}-q) }} \to 1 \text{ as } \gamma \to \infty.
\end{equation}
(Above, the union is taken over $\a \in \a_0 \NN$ with $\a < \gamma$.) To bring \eqref{eq:79:7:2} into a slightly more convenient form, let $\delta \in \NN_0$ be the least integer such that $m_0 \leq k^\delta$ and $k^{-2\a_0} \geq k^{-\delta} + q k^{-2\delta}$; this ensures that for each $\a \in \a_0\NN$ we have
\begin{equation}\label{eq:79:71:1}
	[k^\delta, k^{\a - \delta}) \subset 	[m_0,k^{\a-2\a_0}-q).
\end{equation}
(Note that the interval on the left hand side is empty when $\a \leq 2\delta$.)
Thus, it will suffice to show that 
\begin{equation}\label{eq:79:7:3}
	\underline{d}\bra{\bigcup_{\a_0 \mid \a < \gamma} \bigcup_{\vec x \in X_*} \bra{k^\a A(\vec x)  + [k^{\delta},k^{\a-\delta}) }} \to 1 \text{ as } \gamma \to \infty.
\end{equation}

\begin{lemma}\label{lem:internal-2}
	There exists $\b_* \in \beta_0 \NN$ such that 
	\begin{equation}\label{eq:79:beta*}
	\bar{d}\bra{ l^{\b_*} \NN_0 \setminus \bigcup_{\vec x \in X_*} A(\vec x) } = 0.
	\end{equation}
\end{lemma}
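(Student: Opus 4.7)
The plan is to exploit finiteness of $\Omega^d$ to show that the decreasing family $(X_\b)_{\b \in \b_0 \NN_0}$ stabilises, and then use the definition of $X_\b$ together with subadditivity of $\bar d$ to conclude.

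First I would verify that the family $(X_\b)_{\b \in \b_0 \NN_0}$ is nonincreasing in $\b$. Indeed, if $\b' \geq \b$ then $l^{\b'} \NN_0 \subset l^\b \NN_0$, so $A(\vec x) \cap l^{\b'}\NN_0 \subset A(\vec x) \cap l^\b \NN_0$; hence $\bar d\bra{A(\vec x) \cap l^{\b'} \NN_0} > 0$ implies $\bar d\bra{A(\vec x) \cap l^\b \NN_0} > 0$, giving $X_{\b'} \subset X_\b$. Since $\Omega^d$ is finite and the $X_\b$ are all subsets of $\Omega^d$, the chain must terminate: there exists $\b_* \in \b_0 \NN$ with $X_\b = X_{\b_*}$ for all $\b \in \b_0 \NN$ with $\b \geq \b_*$. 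Taking the intersection over all $\b \in \b_0 \NN_0$ (noting that $X_\b \supset X_{\b_*}$ for $\b \leq \b_*$ as well), this stable value coincides with $X_*$, so $X_{\b_*} = X_*$.

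Next, I would partition $l^{\b_*}\NN_0$ according to the value of $\vec f$. Since $\Omega^d = X_* \cup (\Omega^d \setminus X_*)$, we have the inclusion
\[
	l^{\b_*} \NN_0 \setminus \bigcup_{\vec x \in X_*} A(\vec x) \subset \bigcup_{\vec x \in \Omega^d \setminus X_*} \bra{A(\vec x) \cap l^{\b_*} \NN_0}.
\]
By the very definition \eqref{eq:79:def-Xb} of $X_{\b_*}$, each $\vec x \in \Omega^d \setminus X_{\b_*} = \Omega^d \setminus X_*$ satisfies $\bar d\bra{A(\vec x) \cap l^{\b_*}\NN_0} = 0$. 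The union on the right-hand side above is indexed by a finite set, so by subadditivity of the upper density its upper density is $0$, which yields \eqref{eq:79:beta*}.

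There is no substantive obstacle here: the argument is essentially a pigeonhole on the finite poset $2^{\Omega^d}$ combined with the trivial fact that a finite union of sets of upper density zero has upper density zero. The only point requiring minimal care is ensuring that the chosen $\b_*$ lies in $\b_0\NN$ (as required by the statement), which is automatic since the stabilisation takes place along the subsequence indexed by $\b_0\NN$.
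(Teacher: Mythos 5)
Your proof is correct and follows essentially the same approach as the paper: stabilisation of the descending chain $(X_{i\b_0})_{i}$ by finiteness of $\Omega^d$, then subadditivity of $\bar d$ over the finitely many $\vec x \notin X_*$, each contributing upper density zero within $l^{\b_*}\NN_0$.
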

\begin{proof}
	Note that for each $\b \in \b_0\NN$ we have $X_{\b+\b_0} \subset X_\b$. Thus, $(X_{i\b_0})_{i=0}^\infty$ is a descending sequence of finite sets, and hence an eventually constant sequence. Hence, there exists $\b_* \in \beta_0\NN$ such that $X_* = X_{\b_*}$. In other words, for each $\vec x \in \Omega^d$, if $\bar{d}\bra{l^{\b_*} \NN \cap A(\vec x)} > 0$ then $\vec x \in X_*$. Bearing in mind \eqref{eq:798:4}, it follows that 
	\begin{align}
	\bar{d}\bra{ l^{\b_*} \NN_0 \setminus \bigcup_{\vec x \in X_*} A(\vec x) } \leq \sum_{\vec x \in \Omega^d \setminus X_*} \bar{d}\bra{ l^{\b_*} \NN_0 \cap A(\vec x) } = 0,
	\end{align}
	as needed.
\end{proof}

Applying Lemma \ref{lem:internal-2} to the union in \eqref{eq:79:7:3} we obtain
\begin{equation}\label{eq:79:8:1}
	\underline{d}\bra{ \bigcup_{\a_0 \mid \a < \gamma} \bigcup_{\vec x \in X_*} \bra{k^\a A(\vec x)  + [k^{\delta},k^{\a-\delta}) }} 
	\geq \underline{d}\bra{\bigcup_{\a_0 \mid \a < \gamma} \bra{k^\a l^{\b_*} \NN_0  + [k^\delta,k^{\a-\delta}) }}.
\end{equation}
Thus, in order to finish the argument, it will suffice to show that
\begin{equation}\label{eq:79:8:2}
	\underline{d}\bra{\bigcup_{\a_0 \mid \a < \gamma} \bra{k^\a l^{\b_*} \NN_0  + [k^\delta,k^{\a-\delta}) }} \to 1 \text{ as } \gamma \to \infty.
\end{equation}
It will be convenient to prove a slightly more general fact.

\begin{lemma}\label{lem:dens-union-1}
	Let $k,m,\delta \in \NN$, $k\geq 2$. Then
	\begin{align}
		\underline{d}\bra{ \bigcup_{\a=0}^{\gamma-1} \bra{m k^\a \NN_0 + [k^{\delta},k^{\a-\delta})} } \to 1 \text{ as } \gamma \to 1.
	\end{align}
\end{lemma}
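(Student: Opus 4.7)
The plan is to apply the second moment method. Set $A_\a := m k^\a \NN_0 + [k^\delta, k^{\a - \delta})$, so the set in question is $B_\gamma := \bigcup_{\a = 0}^{\gamma - 1} A_\a$. Since each $A_\a$ is a union of residue classes modulo $m k^\a$, and hence modulo $m k^{\gamma - 1}$, the set $B_\gamma$ is $m k^{\gamma - 1}$-periodic, so $d(B_\gamma)$ exists and equals $\underline{d}(B_\gamma)$; thus it suffices to prove $d(B_\gamma) \to 1$ as $\gamma \to \infty$. A direct count gives $A_\a = \emptyset$ for $\a \leq 2\delta$ and $d(A_\a) = (k^{-\delta} - k^{-\a + \delta})/m$ for $\a \geq 2\delta + 1$, so $d(A_\a) \geq c := k^{-\delta}(1 - 1/k)/m > 0$ for all $\a$ large enough.

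The heart of the proof is the pairwise estimate
\[
	d(A_\a \cap A_{\a'}) \;\leq\; d(A_\a)\, d(A_{\a'}) + k^{\a - \a' - \delta}/m \qquad (0 \leq \a < \a' < \gamma),
\]
which one proves by a counting argument on a single period of $A_{\a'}$: within $[0, mk^{\a'})$ the set $A_{\a'}$ is the interval $[k^\delta, k^{\a' - \delta})$, and among its integer points the further constraint $n \in A_\a$ reads $n \bmod mk^\a \in [k^\delta, k^{\a - \delta})$; partitioning the interval according to the residue modulo $mk^\a$ leaves at most $\lceil (k^{\a' - \delta} - k^\delta)/(mk^\a)\rceil \cdot (k^{\a - \delta} - k^\delta)$ good points, and dividing by $mk^{\a'}$ yields the displayed bound (the additive error comes from the ceiling).

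With $\mu_\gamma := \sum_{\a < \gamma} d(A_\a)$, the lower bound on $d(A_\a)$ gives $\mu_\gamma \to \infty$ linearly in $\gamma$. Viewing $X_\gamma(n) := \abs{\set{\a < \gamma}{n \in A_\a}}$ as a random variable on the uniform measure on $[0, mk^{\gamma - 1})$, one has $\EE X_\gamma = \mu_\gamma$, and the pairwise bound gives
\[
	\EE X_\gamma^2 = \sum_\a d(A_\a) + \sum_{\a \neq \a'} d(A_\a \cap A_{\a'}) \;\leq\; \mu_\gamma^2 + C \mu_\gamma
\]
for some constant $C = C(k, m, \delta)$, since the total off-diagonal correction $\sum_{\a \neq \a'} k^{-\abs{\a - \a'} - \delta}/m$ is $O(\gamma/m) = O(\mu_\gamma)$. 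The Paley--Zygmund inequality
\[
	d(B_\gamma) = \Pr\bbra{X_\gamma \geq 1} \;\geq\; \frac{(\EE X_\gamma)^2}{\EE X_\gamma^2} \;\geq\; \frac{\mu_\gamma^2}{\mu_\gamma^2 + C \mu_\gamma} = 1 - O(1/\mu_\gamma)
\]
then tends to $1$ as $\mu_\gamma \to \infty$.

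The main obstacle is establishing the pairwise intersection bound; it is essentially sharp and expresses the ``approximate independence'' of $A_\a$ and $A_{\a'}$ at the scale $mk^{\a'}$, with the residual correlation decaying geometrically in $\abs{\a' - \a}$. This geometric decay is exactly what is needed to keep the off-diagonal contribution to $\EE X_\gamma^2$ of order $\mu_\gamma$ rather than $\mu_\gamma^2$, so that Paley--Zygmund delivers $d(B_\gamma) \to 1$.
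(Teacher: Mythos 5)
Your proof is correct, and it takes a genuinely different route from the paper's. The paper normalises to $m<k/2$, $\delta=1$ by passing to base $k^\lambda$, treats $n$ as a random base-$k$ string of length $\nu$, and observes that for each $\a$ the three consecutive digits $\bb u_\a,\bb u_{\a-1},\bb u_{\a-2}$ can be chosen (with probability bounded below by a constant $p>0$, uniformly over the higher digits) so as to force $n\in A_\a$; since disjoint digit-triples give independent such opportunities, missing all of them has probability at most $(1-p)^{\lfloor\gamma/3\rfloor-1}$. You instead run the second-moment method directly: the key input is the pairwise correlation bound $d(A_\a\cap A_{\a'})\le d(A_\a)d(A_{\a'})+k^{\a-\a'-\delta}/m$, whose proof by counting residues within one period of $A_{\a'}$ is sound, and whose geometrically decaying error term makes the off-diagonal contribution to $\EE X_\gamma^2$ of order $\mu_\gamma$; Paley--Zygmund (equivalently Cauchy--Schwarz, since $X_\gamma$ is integer-valued) then gives $d(B_\gamma)\ge 1-O(1/\mu_\gamma)$, and periodicity of $B_\gamma$ upgrades this to the lower density. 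The paper's digit-by-digit argument yields a faster, geometric convergence rate $1-(1-p)^{\Theta(\gamma)}$, whereas your bound is $1-O(1/\gamma)$; both amply suffice for the lemma, and your version avoids the base-change normalisation and the explicit conditional-probability bookkeeping at the cost of having to establish and sum the pairwise correlations.
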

\begin{proof}
	Note that, for any $\lambda \in \NN$, we may freely replace $k$ with $k^\lambda$ and $\delta$ with $\ceil{\delta/\lambda}$. Picking sufficiently large $\lambda$, we may thus assume that $m < k/2$ and $\delta = 1$.

	Bearing in mind Lemma \ref{lem:dbar-subseq}, it will suffice to show that 
	\begin{align}
		\lim_{\gamma \to \infty} \liminf_{\nu \to \infty} k^{-\nu} \abs{ [0,k^\nu) \cap \bigcup_{\a=0}^{\gamma-1} \bra{m k^\a \NN + [k^\delta,k^{\a-\delta}) } }= 1.
	\end{align}
	
	Pick any $\gamma,\nu \in \NN$ with $\nu \geq \gamma \geq 3$. Consider the random variable $\bb n$, uniformly distributed on $[0,k^\nu)$, and let 
	\[
		\bb u = \bb u_{\nu-1} \bb u_{\nu-2} \dots \bb u_1 \bb u_0 = (\bb n)_k^\nu.
	\]
	Then $\bb u_0,\bb u_1, \dots, \bb u_{\nu-1}$ are jointly independent random variables, each uniformly distributed in $\Sigma_k$. For $\a \in \NN$ with $3 \leq \a < \gamma$, let $\cA_{\a}$ be the event that 
\begin{enumerate}[wide]
\item[(\emph{i})] $[\bb u_{\nu-1} \bb u_{\nu-2} \dots \bb u_{\a}]_k \equiv 0 \bmod m$;
\item[(\emph{ii})] $\bb u_{\a-1} = 0$;
\item[(\emph{iii})] $\bb u_{\a-2}\bb u_{\a-3}\dots\bb u_{1} \neq 0$.
\end{enumerate}
We have
\begin{align}\label{eq:87:1}
k^{-\nu}  \abs{ [0,k^\nu) \cap \bigcup_{\a=0}^{\gamma-1} \bra{m k^\a \NN + [k^\delta,k^{\a-\delta}) } } = 
\PPP\bra{\cA_\a \text{ for at least one } \a \in [3, \gamma) }.
\end{align}
Let $\cA_\a'$ be the event that $\cA_\a$ holds and additionally 
\begin{enumerate}[wide]
\item[(\emph{iii}')] $\bb u_{\a-2} \neq 0$.
\end{enumerate}
Since condition (\emph{iii}') clearly implies (\emph{iii}), the event $\cA_\a'$ depends only on $\bb u_{\nu-1} \bb u_{\nu-2} \dots \bb u_{\a-2}$.

For each $\alpha$ with $3 \leq \a < \gamma$, conditional on $\bb u_{\nu-1} \bb u_{\nu-2} \dots \bb u_{\a+1}$, we can ensure that $\cA_{\a}'$ holds by assigning suitable values to $\bb u_{\a},\bb u_{\a-1}, \bb u_{\a-2}$, and thus
\begin{align*}
\PPP\bra{\cA_\a' \middle| \bb u_{\nu-1} \dots \bb u_{\a+1} }
&\geq \PPP\bra{
\begin{array}{l}
[\bb{u}_{\a}]_k \equiv - [\bb u_{\nu-1} \dots \bb u_{\a+1}0]_k \bmod{m},\\ 
\bb{u}_{\a} \neq 0,\ \bb{u}_{\a-1} = 0,\ \bb{u}_{\a-2} \neq 0
\end{array}
\middle| \bb u_{\nu-1} \dots \bb u_{\a+1}} 
\\& \geq
 \frac{\ip{k/m}-1 }{k} \cdot \frac{1}{k} \cdot \frac{k-1}{k} =: p > 0.
\end{align*}
Applying this inequality with $\a = \gamma-1,\gamma-4,\dots$, we conclude that
\begin{align}\label{eq:87:2}
\PPP\bra{\cA_\a \text{ for at least one } \a \in [3,\gamma) } \geq 1 - (1-p)^{\ip{\gamma/3} - 1} \to 1 \text{ as } \gamma \to \infty.
\end{align}
Combining \eqref{eq:87:1} and  \eqref{eq:87:2} completes the argument.
\end{proof}

Applying Lemma \ref{lem:dens-union-1} with $k' = k^\a$, $m = l^{\b_*}$ and $\delta' = \ceil{\delta/\a}$ we conclude that \eqref{eq:79:8:2} holds, which together with previous considerations completes the proof.
\end{proof}

\begin{proof}[Proof of Theorem \ref{thm:main-asymmetric}]
	It follows from Theorem \ref{thm:main-symmetric} that $f$ is asymptotically invariant under shifts by $q$ for some $q \in \NN$. Replacing $f(n)$ with $f(qn+r)$ for arbitrary $r \in \{0,1,\dots,q-1\}$, we may freely assume that $q=1$ (cf.\ \cite[Thm.\ 6 .8.2]{AlloucheShallit-book} ). We claim that $f$ is asymptotically constant. Pick any $c \in \Omega$ such that the preimage $f^{-1}(c)$ has positive upper density. Then (e.g.{} by \cite[Lemma 2.3]{ByszewskiKonieczny-2020-AA}) there exists some $m \in \NN$ and $\a_0 \in \NN$
	\begin{align}
	\label{eq:304:a} f(n) &= c & \text{ for all } n \in [k^\a m, k^\a (m+1)) \text{ and } \a \in \a_0 \NN.
	\end{align}
	In other words, letting $K = k^{\a_0}$, $\mu_0 := \log_K(m)$ and $\lambda := \log_K\bra{(m+1)/{m}}$, we have
	\begin{align}
	\label{eq:305:a} f(n) &= c & \text{ for all } n \in \NN \text{ with } \log_K n \in [\mu_0,\mu_0+\lambda) + \ZZ.
	\end{align}

Because $f$ is asymptotically $l$-automatic, using the pigeon-hole principle, we can find $\beta \in \NN_0$ and $\delta \in \NN$ such that
\begin{align}\label{eq:306:1}
	f(l^{\beta + \delta}n + 1)& = f(l^\beta n + 1) & \text{ for almost all } n \in \NN_0.
\end{align}
It follows from \eqref{eq:305:a} and \eqref{eq:306:1} that
	\begin{align}
	\label{eq:306:2} f(n) &= c & \text{ for almost all } n \in l^{\beta}\NN_0+1 \text{ with }\log_K(l^\delta n) \in [\mu_0,\mu_0+\lambda)+ \ZZ.
	\end{align}
Since $f(n+1) = f(n)$ for almost all $n$, we can strengthen \eqref{eq:306:2} to
	\begin{align}
	\label{eq:306:3} f(n) &= c & \text{ for almost all } n \in \NN_0 \text{ with }\log_K(n) \in [\mu_1,\mu_1+\lambda)+ \ZZ, 
	\end{align}	
where $\mu_1 = \mu_0 - \delta \log_K(l)$. Iterating this reasoning, for each $h \in \NN$ we obtain
	\begin{align}
	\label{eq:306:3-h} f(n) &= c & \text{ for almost all } n \in \NN_0 \text{ with }\log_K(n) \in [\mu_h,\mu_h+\lambda)+ \ZZ, 
	\end{align}	
where $\mu_h := \mu_0 - h \delta \log_K(l)$. Since $ \log_K(l) \in \RR \setminus \QQ$, we can find a finite sequence $h_1,h_2,\dots,h_s \in \NN_0$ such that 
\[
	\textstyle \bigcup_{i=1}^s \left[\mu_{h_i} , \mu_{h_i} + \lambda \right) + \ZZ = \textstyle \bigcup_{i=1}^s \left[\mu_0 - h_i \delta \log_K(l) , \mu_0 - h_i \delta \log_K(l)  + \lambda \right) + \ZZ = \RR.
\]
As a consequence, $f(n) = c$ for almost all $n \in \NN$, as needed.
\end{proof} 

\bibliographystyle{alphaabbr}
\bibliography{bibliography}

\end{document}